\documentclass[12pt]{article}

\usepackage{amsmath,amsthm,amsfonts,amssymb,latexsym,amscd,comment}

\begin{document}

\newtheorem*{theo}{Theorem}
\newtheorem*{pro}{Proposition}
\newtheorem*{cor}{Corollary}
\newtheorem*{lem}{Lemma}
\newtheorem{theorem}{Theorem}[section]
\newtheorem{corollary}[theorem]{Corollary}
\newtheorem{lemma}[theorem]{Lemma}
\newtheorem{proposition}[theorem]{Proposition}
\newtheorem{conjecture}[theorem]{Conjecture}
\newtheorem{commento}[theorem]{Comment}
\newtheorem{definition}[theorem]{Definition}
\newtheorem{problem}[theorem]{Problem}
\newtheorem{remark}[theorem]{Remark}
\newtheorem{example}[theorem]{Example}
\newcommand{\Naturali}{{\mathbb{N}}}
\newcommand{\R}{{\mathbb{R}}}
\newcommand{\Toro}{{\mathbb{T}}}
\newcommand{\Relativi}{{\mathbb{Z}}}
\newcommand{\HH}{\mathfrak H}
\newcommand{\KK}{\mathfrak K}
\newcommand{\LL}{\mathfrak L}
\newcommand{\as}{\ast_{\sigma}}
\newcommand{\tn}{\vert\hspace{-.3mm}\vert\hspace{-.3mm}\vert}
\def\mA{{\mathfrak A}}
\def\A{{\mathcal A}}
\def\mB{{\mathfrak B}}
\def\B{{\mathcal B}}
\def\C{{\mathcal C}}
\def\D{{\mathcal D}}
\def\F{{\mathcal F}}
\def\H{{\mathcal H}}
\def\J{{\mathcal J}}
\def\K{{\mathcal K}}
\def\L{{\cal L}}
\def\N{{\cal N}}
\def\M{{\cal M}}
\def\O{{\mathcal O}}
\def\P{{\cal P}}
\def\SS{{\cal S}}
\def\T{{\cal T}}
\def\U{{\cal U}}
\def\W{{\cal W}}
\def\Z{{\mathbb Z}}
\def\cc{{\mathbb C}}
\def\b{\lambda_B(P}
\def\j{\lambda_J(P}
\def\span{\operatorname{span}}
\def\Ad{\operatorname{Ad}}
\def\sg{\operatorname{Sym}}
\def\tr{\operatorname{tr}}
\def\mnk{{M_n(\cc)^{\otimes k}}}
\def\id{\operatorname{id}}
\def\en{\operatorname{End}}
\def\aut{\operatorname{Aut}}
\def\la{\langle}
\def\ra{\rangle}
\def\Fix{\bf{F}}

\title{On Invariant MASAs for Endomorphisms of the Cuntz Algebras}

\author{Jeong Hee Hong\footnote{This work was supported by National Research Foundation of Korea
Grant funded by the Korean Government (KRF--2008--313-C00039).}, Adam Skalski and
Wojciech Szyma{\'n}ski\footnote{This work was partially supported by the FNU Rammebevilling
grant `Operator algebras and applications' (2009--2011).}}

\date{10 January 2010}
\maketitle

\renewcommand{\sectionmark}[1]{}

\vspace{7mm}
\begin{abstract}
The problem of existence of standard (i.e.\ product-type) invariant MASAs
for endomorphisms of the Cuntz algebra $\O_n$ is
studied. In particular endomorphisms which preserve the canonical diagonal
MASA $\D_n$ are investigated. Conditions on a unitary
$w\in\U(\O_n)$ equivalent to the fact that the corresponding endomorphism
$\lambda_w$ preserves $\D_n$ are found, and it is shown that they may be satisfied
by unitaries which do not normalize $\D_n$. Unitaries giving rise to endomorphisms which leave all
standard MASAs invariant and have identical actions on them are
characterized. Finally some properties of examples of
finite-index endomorphisms of $\O_n$ given by Izumi and related
to sector theory are discussed  and it is shown that they lead to
an endomorphism of $\O_2$ associated to a matrix unitary which does not preserve any standard MASA.
\end{abstract}

%\begin{comment}
\vfill\noindent {\bf MSC 2010}: 46L55, 46L40, 46L05

\vspace{3mm}
\noindent {\bf Keywords}: Cuntz algebra, endomorphism, invariant MASA, diagonal.

\newpage

%\end{comment}
\section{Introduction}

Systematic investigations of endomorphisms of the $C^*$-algebras $\O_n$ were initiated by Cuntz in \cite{Cun2}, where he noticed
the bijective correspondence between unitaries in $\O_n$ and unital endomorphisms of $\O_n$. From this, he derived a number of
interesting properties of such endomorphisms.

Thirty years later, endomorphisms of $\O_n$ still constitute an active area of research offering a number of challenging problems
and interesting connections with other fields, including index theory, noncommutative entropy, quantum groups and classical
dynamical systems (see for example \cite{CF,CP,Cun3,I,L}). In particular, significant progress has been achieved recently in the
study of endomorphisms corresponding to permutation unitaries, \cite{CKS,CS, CS2,K1,K2,SZ,S}.

In \cite{Cun2}, Cuntz considered also the following two problems: given a unitary $u\in\O_n$,
under what conditions the corresponding endomorphism $\lambda_u$ globally preserves
(i) the canonical UHF-subalgebra $\F_n$, and (ii) the canonical diagonal MASA $\D_n$?
He gave the complete answers in the case when $\lambda_u$ is an {\em automorphism} of
$\O_n$ and provided partial information in the general case when $\lambda_u$ is not
necessarily surjective. The former problem has been recently successfully tackled in \cite{CRS}. In the
present note, we consider some issues closely related to the latter.

The main focus of the present paper are MASAs of $\O_n$ globally invariant under the action of a unital endomorphism.  The best
studied and most important endomorphism of $\O_n$, the canonical shift $\varphi$, is often viewed as the noncommutative
generalization of the standard Bernoulli-Markov shift of the topological dynamics, as the restriction of $\varphi$ to the diagonal
is indeed the map induced by the standard shift. As any standard MASA in $\O_n$  (i.e.\ the image of the canonical diagonal MASA
$\D_n$ under a Bogolyubov automorphism) is naturally isomorphic to the algebra of continuous functions on the full
shift on $n$-letters $\mathfrak{C}$, the usual arena of symbolic dynamics (\cite{Kit}), it seems natural to ask what other classical
transformations of $\mathfrak{C}$ one can obtain by restricting endomorphisms of $\O_n$ to invariant standard MASAs. It is
easy to notice that the canonical shift leaves all standard MASAs invariant and always leads to the same transformation of
$\mathfrak{C}$. The examples in \cite{SZ} showed that there exist endomorphisms of $\O_n$ which reduce to the usual shift on
$\mathfrak{C}$ in some invariant standard MASA, but lead to a different transformation in another. One could rephrase this in the
following statement -- a classical system arising in symbolic dynamics may have different `quantizations' to endomorphisms of
Cuntz algebras and  at the same time an endomorphism of $\O_n$ may be a quantization of several different classical dynamical
systems. A specific motivation for considering such questions comes from usefulness in the context of noncommutative entropy
calculations of examining several different MASAs invariant under the same endomorphism (for example, see \cite{SZ,Sk2}).

In this work we initiate a systematic study of the topics listed above. In particular, for a unital endomorphism $\rho$ of $\O_n$
we consider the question of how to identify all its $\rho$-invariant standard (product-type) MASAs.  In Section 3 we show that this problem reduces to
deciding which unitaries $w$ in $\O_n$ have the property that the corresponding endomorphism $\lambda_w$ globally preserves the
diagonal $\D_n$. As already noted by Cuntz in \cite{Cun2}, this is certainly the case when $w$ normalizes $\D_n$. In the main part of the present
paper we consider this problem from a much more general perspective. In particular, our Theorem \ref{finitecond} gives a
necessary and sufficient condition for a unitary $w$ in the algebraic part of $\F_n$ so that $\lambda_w(\D_n)\subseteq\D_n$.
Verification of this condition involves only finitely many linear operations and could be easily performed by a computer.
Corollaries to Theorem \ref{finitecond} offer further simplifications of the general conditions under additional assumptions and
lead to  a large class of examples of such endomorphisms corresponding to unitary matrices not belonging to the normalizer of
$\D_n$.

In Section \ref{bogolyubov}, we characterize the unitaries which are associated with the endomorphisms of $\O_n$ which leave all
standard MASAs invariant and have identical actions on each of them. This is closely related to the work of Bratteli and Evans on
the canonical $\U(n)$ action on $\O_n$ in \cite{BE}, and earlier paper of Price on the action of $U(n)$ on the UHF algebra $\F_n$,
\cite{Price}. We note that commutation with Bogolyubov automorphisms has been exploited in the literature also in other contexts, see for
example Section 4 in \cite{I}.

Section \ref{izumi} is devoted to the analysis of examples of endomorphisms associated to finite abelian groups by Izumi in
\cite{I}. It turns out that Izumi's examples (arising from the subfactor theory), although not permutation endomorphisms
themselves, are at the same time square roots of a permutation endomorphism and compositions of a permutation endomorphism with a
Bogolyubov automorphism. In particular we obtain an endomorphism of $\O_2$ corresponding to a matrix unitary which does not
globally preserve any standard MASA.

\vspace{3mm}\noindent
{\bf Acknowledgements.} The first named author would like to thank Uffe Haagerup and other
members of mathematics department at Odense for their warm hospitality during her stay there
in the summer of 2009, when part of this work was done. The second named author's contribution to
this note was made during his visit to University of Tokyo in October-November 2009, funded by
a JSPS Short Term Postdoctoral Fellowship.

\section{Notation and preliminaries}

If $n$ is an integer greater than 1, then the Cuntz algebra $\O_n$ is a unital, simple,
purely infinite $C^*$-algebra generated by $n$ isometries $S_1, \ldots, S_n$, satisfying
$\sum_{i=1}^n S_i S_i^* = I$, \cite{Cun1}.
We denote by $W_n^k$ the set of $k$-tuples $\alpha = (\alpha_1,\ldots,\alpha_k)$
with $\alpha_m \in \{1,\ldots,n\}$, and by $W_n$ the union $\cup_{k=0}^\infty W_n^k$,
where $W_n^0 = \{0\}$. We call elements of $W_n$ multi-indices.
If $\alpha \in W_n^k$ then $|\alpha| = k$ is the length of $\alpha$.
If $\alpha = (\alpha_1,\ldots,\alpha_k) \in W_n$, then $S_\alpha = S_{\alpha_1} \ldots S_{\alpha_k}$
($S_0 = I$ by convention) is an isometry with range projection $P_\alpha=S_\alpha S_\alpha^*$.
Every word in $\{S_i, S_i^* \ | \ i = 1,\ldots,n\}$ can be uniquely expressed as
$S_\alpha S_\beta^*$, for $\alpha, \beta \in W_n$ \cite[Lemma 1.3]{Cun1}.

We denote by $\F_n^k$ the $C^*$-subalgebra of $\O_n$ spanned by all words of the form
$S_\alpha S_\beta^*$, $\alpha, \beta \in W_n^k$, which is isomorphic to the
matrix algebra $M_{n^k}({\mathbb C})$. The norm closure $\F_n$ of
$\cup_{k=0}^\infty \F_n^k$, is the UHF-algebra of type $n^\infty$,
called the core UHF-subalgebra of $\O_n$, \cite{Cun1}. It is the fixed point algebra
for the gauge action of the circle group $\gamma:U(1)\rightarrow{\rm Aut}(\O_n)$ defined
on generators as $\gamma_t(S_i)=tS_i$. For $k\in\Z$, we denote by $\O_n^{(k)}
:=\{x\in\O_n:\gamma_t(x)=t^k x\}$,
the spectral subspace for this action. In particular, $\F_n=\O_n^{(0)}$.
The $C^*$-subalgebra of $\F_n$ generated by
projections $P_\alpha$, $\alpha\in W_n$, is a MASA (maximal abelian subalgebra) both in $\F_n$ and in $\O_n$. We call it
the diagonal and denote $\D_n$. We also set $\D_n^k:=\D_n\cap\F_n^k$.
Throughout this paper we are interested in the inclusions
$$ \D_n \subseteq \F_n\subseteq \O_n.$$

We denote by $\SS_n$ the group of those unitaries in $\O_n$ which can be written
as finite sums of words, i.e., in the form $u = \sum_{j=1}^m S_{\alpha_j}S_{\beta_j}^*$
for some $\alpha_j, \beta_j \in W_n$. It turns out that $\SS_n$ is isomorphic to
the Higman-Thompson group $G_{n,1}$ \cite{B,Ne}. We also denote $\P_n=\SS_n\cap\U(\F_n)$. Then
$\P_n=\cup_k\P_n^k$, where $\P_n^k$ are permutation unitaries in $\U(\F_n^k)$.
That is, for each $u\in\P_n^k$ there is a unique permutation $\sigma$ of multi-indices
$W_n^k$ such that
\begin{equation}u = \sum_{\alpha \in W_n^k} S_{\sigma(\alpha)} S_\alpha^*.\label{permutunitary}\end{equation}

As shown by Cuntz in \cite{Cun2}, there exists the following bijective correspondence
between unitaries in $\O_n$ and unital $*$-endomorphisms of $\O_n$ (whose collection we denote
by $\en(\O_n)$). A unitary $u$  in $\O_n$ determines an endomorphism $\lambda_u$ by
$$ \lambda_u(S_i) = u S_i, \;\;\; i=1,\ldots, n. $$
Conversely, if $\rho :\O_n\rightarrow \O_n$ is an endomorphism, then
$\sum_{i=1}^n\rho(S_i)S_i^*=u$ gives a unitary $u\in\O_n$
such that $\rho=\lambda_u$. If the unitary $u$ arises from a permutation $\sigma$ via the formula
\eqref{permutunitary}, the corresponding endomorphism will be sometimes denoted by $\lambda_{\sigma}$.
Composition of endomorphisms corresponds to a `convolution'
multiplication of unitaries as follows:
\begin{equation}\label{convolution}
\lambda_u \circ \lambda_w = \lambda_{\lambda_u(w)u}
\end{equation}
We denote by $\varphi$ the canonical shift:
$$ \varphi(x)=\sum_iS_ixS_i^*, \;\;\; x\in\O_n. $$
If we take $u=\sum_{i, j}S_iS_jS_i^*S_j^*$ then $\varphi=\lambda_u$.
It is well-known that $\varphi$ leaves invariant both $\F_n$ and $\D_n$, and that $\varphi$
commutes with the gauge action $\gamma$.

If $u\in\U(\O_n)$ then for each positive integer $k$ we denote
\begin{equation}\label{uk}
u_k = u \varphi(u) \cdots \varphi^{k-1}(u).
\end{equation}
We agree that $u_k^*$ stands for $(u_k)^*$. If
$\alpha$ and $\beta$ are multi-indices of length $k$ and $m$, respectively, then
$\lambda_u(S_\alpha S_\beta^*)=u_kS_\alpha S_\beta^*u_m^*$. This is established through
a repeated application of the identity $S_i a = \varphi(a)S_i$, valid for all
$i=1,\ldots,n$ and $a \in \O_n$.

Let $z\in\U(\F_n^1)$. Then $\lambda_z$ is an automorphism of $\O_n$ (with inverse $\lambda_{z^*}$), called a Bogolyubov
automorphism. Since $z\in\F_n$, $\lambda_z$ restricts to an automorphism of $\F_n$. Each Bogolyubov automorphism acts as a
unitary transformation on the Hilbert space $\span\{S_i:i=1,\ldots,n\}$ generating $\O_n$.

For algebras $A\subseteq B$ we denote by $\N_B(A)=\{u\in\U(B):uAu^*=A\}$ the normalizer
of $A$ in $B$ and by $A' \cap B=\{a \in A: \forall_{b \in B} \; ab=ba\}$ the relative commutant of $A$ in $B$.

\section{Standard invariant MASAs}\label{masa}

Let $z\in\U(\F_n^1)$. Then $\lambda_z$ is a Bogolyubov automorphism of $\O_n$ and $\A:=\lambda_z(\D_n)$ is a MASA in $\O_n$ (and
in $\F_n$). We will refer to MASAs of this form as standard. Every standard MASA is isomorphic to the $C^*$-algebra of
continuous, complex-valued functions on the full shift on $n$ letters, denoted further by $\mathfrak{C}$ (as it is homeomorphic
to a Cantor set).

\begin{lemma}\label{a-masa}
Let $z\in\U(\F_n^1)$ and denote $\A_1:=\lambda_z(\D_n^1)$. Then for each
positive integer $k$ we have
$$ \lambda_z(\D_n^k)=\A_1\varphi(\A_1)\cdots\varphi^{k-1}(\A_1). $$
Thus $\A$ is the increasing limit of algebras $\A_1\varphi(\A_1)\cdots\varphi^{k-1}(\A_1)$.
\end{lemma}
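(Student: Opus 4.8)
The plan is to reduce the claim to the analogous decomposition of the bare diagonal, namely
$$ \D_n^k=\D_n^1\,\varphi(\D_n^1)\cdots\varphi^{k-1}(\D_n^1), $$
and then transport this through the Bogolyubov automorphism $\lambda_z$. For the diagonal decomposition I would argue directly with the generating projections: an easy induction using $\varphi(P_j)=\sum_i P_{(i,j)}$ gives $\varphi^m(P_j)=\sum_{\gamma\in W_n^m}P_{(\gamma,j)}$, and then the Cuntz relations $S_i^*S_{i'}=\delta_{ii'}I$ yield
$$ P_{i_1}\,\varphi(P_{i_2})\cdots\varphi^{k-1}(P_{i_k})=P_{(i_1,\ldots,i_k)} $$
for all $i_1,\ldots,i_k\in\{1,\ldots,n\}$. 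Since the projections $P_\alpha$, $\alpha\in W_n^k$, span $\D_n^k$, and the subalgebras $\varphi^m(\D_n^1)$ for $0\le m\le k-1$ all lie inside the abelian algebra $\D_n^k$ and hence commute, this identifies $\D_n^k$ with the algebra generated by $\D_n^1,\varphi(\D_n^1),\ldots,\varphi^{k-1}(\D_n^1)$.

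Applying the homomorphism $\lambda_z$ then gives $\lambda_z(\D_n^k)=\A_1\,\lambda_z(\varphi(\D_n^1))\cdots\lambda_z(\varphi^{k-1}(\D_n^1))$, so everything hinges on identifying $\lambda_z(\varphi^m(\D_n^1))$ with $\varphi^m(\A_1)=\varphi^m(\lambda_z(\D_n^1))$. The subtle point is that $\lambda_z$ and $\varphi$ do \emph{not} commute. Computing $\lambda_z(\varphi(a))=\sum_i zS_i\lambda_z(a)S_i^*z^*=z\,\varphi(\lambda_z(a))\,z^*$ from $\lambda_z(S_i)=zS_i$ shows $\lambda_z\circ\varphi=\Ad(z)\circ\varphi\circ\lambda_z$, and iterating this (with $z_m=z\varphi(z)\cdots\varphi^{m-1}(z)$ as in \eqref{uk}) yields
$$ \lambda_z\circ\varphi^m=\Ad(z_m)\circ\varphi^m\circ\lambda_z. $$
Hence $\lambda_z(\varphi^m(\D_n^1))=z_m\,\varphi^m(\A_1)\,z_m^*$, and the task reduces to showing that conjugation by $z_m$ is trivial on $\varphi^m(\A_1)$.

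The step I expect to be the main obstacle is precisely this triviality, i.e.\ that $z_m$ commutes with $\varphi^m(\A_1)$. Here the level structure is essential: $z_m\in\U(\F_n^m)$ while $\varphi^m(\A_1)\subseteq\varphi^m(\F_n^1)$, and under the identification $\F_n^k\cong\mnk$ these occupy disjoint, mutually commuting tensor legs (namely legs $1,\ldots,m$ versus leg $m+1$), so $\Ad(z_m)$ fixes $\varphi^m(\A_1)$ pointwise. One clean way to make this transparent is to note that in $\mnk$ one has $z_k=z^{\otimes k}$ and $\lambda_z|_{\F_n^k}=(\Ad z)^{\otimes k}$, whence both $\lambda_z(\D_n^k)$ and $\A_1\varphi(\A_1)\cdots\varphi^{k-1}(\A_1)$ are visibly equal to $\A_1^{\otimes k}$. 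Combining the three steps gives $\lambda_z(\D_n^k)=\A_1\varphi(\A_1)\cdots\varphi^{k-1}(\A_1)$. The final assertion then follows by continuity of $\lambda_z$: as $\D_n=\overline{\bigcup_k\D_n^k}$ with $\D_n^k\subseteq\D_n^{k+1}$, we obtain $\A=\lambda_z(\D_n)=\overline{\bigcup_k\lambda_z(\D_n^k)}$, the increasing limit of the algebras $\A_1\varphi(\A_1)\cdots\varphi^{k-1}(\A_1)$.
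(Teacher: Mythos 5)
Your proof is correct, and every step checks out: the projection identity $P_{i_1}\varphi(P_{i_2})\cdots\varphi^{k-1}(P_{i_k})=P_{(i_1,\ldots,i_k)}$ does give the factorization $\D_n^k=\D_n^1\varphi(\D_n^1)\cdots\varphi^{k-1}(\D_n^1)$; the covariance relation $\lambda_z\circ\varphi^m=\Ad(z_m)\circ\varphi^m\circ\lambda_z$ follows by iterating $\lambda_z(\varphi(a))=z\varphi(\lambda_z(a))z^*$; and since $z_m\in\U(\F_n^m)$ while $\varphi^m(\A_1)\subseteq\varphi^m(\F_n^1)$, the conjugation by $z_m$ is indeed trivial there, which was the one point that needed care. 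In substance this is the same mechanism as the paper's argument, but the organization is genuinely different: the paper runs two inductions entirely inside $\O_n$ --- first proving $\lambda_z\varphi^m(\D_n^1)=\varphi^m(\A_1)$ by peeling off one $\varphi$ at a time via $z_{m+2}=z\varphi(z_{m+1})$ and the commutation of $z$ with the range of $\varphi$, then building up $\lambda_z(\D_n^{k+1})=\lambda_z(\D_n^k)\,\lambda_z\varphi^k(\D_n^1)$ --- whereas you isolate the two structural facts (the product decomposition of $\D_n^k$ and the intertwining formula) as standalone statements and combine them in a single non-inductive step. What your route buys is conceptual clarity, especially in the closing observation that under $\F_n^k\cong M_n(\cc)^{\otimes k}$ one has $z_k=z^{\otimes k}$ and $\lambda_z|_{\F_n^k}=(\Ad z)^{\otimes k}$, so that both sides of the lemma are visibly $\A_1^{\otimes k}$; that is a one-line proof, and it is precisely the tensor picture the paper itself only deploys later, in the proof of Lemma \ref{Weyl}. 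What the paper's double induction buys is that it never leaves the Cuntz-algebra framework or invokes the matrix identification, which keeps the lemma self-contained at the cost of a slightly more opaque computation.
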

\begin{proof} At first we verify by induction on $k$ that $\lambda_z\varphi^k(\D_n^1)
=\varphi^k(\A_1)$. Indeed, for $k=1$ we have
$$ \lambda_z\varphi(\D_n^1)=z_2\varphi(\D_n^1)z_2^*=z\varphi(\A_1)z^*=\varphi(\A_1), $$
since $z$ commutes with $\varphi(\A_1)$. Now suppose $\lambda_z\varphi^k(\D_n^1)
=\varphi^k(\A_1)$. Then
$$ \lambda_z\varphi^{k+1}(\D_n^1)=z_{k+2}\varphi^{k+1}(\D_n^1)z_{k+2}^*=
   z\varphi(z_{k+1}\varphi^k(\D_n^1)z_{k+1}^*)z^*=\varphi^{k+1}(\A_1), $$
since $z$ commutes with the range of $\varphi$.

Now we prove the lemma, again proceeding by induction on $k$. Case $k=1$ is just our
definition of $\A_1$. Suppose $\lambda_z(\D_n^k)=\A_1\varphi(\A_1)\cdots\varphi^{k-1}(\A_1)$.
Then we have
$$ \lambda_z(\D_n^{k+1})=\lambda_z(\D_n^k\varphi^k(\D_n^1))=\lambda_z(\D_n^k)
   \lambda_z\varphi^k(\D_n^1) $$
$$ = \A_1\varphi(\A_1)\cdots\varphi^{k-1}(\A_1)\varphi^k(\A_1) $$
by the preceding observation.
\end{proof}

It follows from Lemma \ref{a-masa} that if $w,z\in\U(\F_n^1)$ then
$$ \lambda_w(\D_n)=\lambda_z(\D_n) \;\; \Leftrightarrow \;\; w^*z\in\N_{\F_n^1}(\D_n^1). $$
In particular, $\lambda_z(\D_n)=\D_n$ if and only if $z\in\N_{\F_n^1}(\D_n^1)=\P_n^1\U(\D_n^1)$.

\begin{proposition}\label{normal-masa}
Let $u\in\U(\O_n)$, $z\in\U(\F_n^1)$, and set $\A=\lambda_z(\D_n)$. Then the following hold.
\begin{description}
\item[{\rm (i)}] $\lambda_u(\A)\subseteq\A$ if and only if $\lambda_{\lambda_{z^*}(u)}
(\D_n)\subseteq\D_n$. In particular, if $\lambda_{z^*}(u)\in\N_{\O_n}(\D_n)$ then
$\lambda_u(\A)\subseteq\A$.
\item[{\rm (ii)}] ${\rm Ad}u(\A)\subseteq\A$ if and only if $\lambda_{z^*}(u)\in\N_{\O_n}(\D_n)$.
\end{description}
\end{proposition}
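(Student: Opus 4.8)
The plan is to reduce both statements to a single conjugation identity and then exploit that $\lambda_{z^*}$ is an automorphism carrying $\A=\lambda_z(\D_n)$ onto $\D_n$. Concretely, I would first establish that for every $u\in\U(\O_n)$ and $z\in\U(\F_n^1)$ one has
$$ \lambda_{z^*}\circ\lambda_u\circ\lambda_z=\lambda_{\lambda_{z^*}(u)} . $$
Everything else is then bookkeeping, since applying the automorphism $\lambda_{z^*}$ to an inclusion of subalgebras preserves it and transports a question about $\A$ into a question about $\D_n$.

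To prove the conjugation identity I would evaluate both sides on the generators $S_i$ and use that a unital endomorphism is determined by its values there. Writing $\lambda_z(S_i)=zS_i$ and using multiplicativity of $\lambda_u$, the essential simplification is $\lambda_u(z)=u_1zu_1^*=uzu^*$, which holds because $z\in\F_n^1$ consists of length-one words on both sides; hence $\lambda_u(\lambda_z(S_i))=uzu^*\,uS_i=uzS_i$. Applying $\lambda_{z^*}$ and invoking $\lambda_{z^*}(z)=z^*zz=z$ together with $\lambda_{z^*}(S_i)=z^*S_i$, the middle factors telescope to give $\lambda_{z^*}(u)\,zz^*S_i=\lambda_{z^*}(u)S_i$, which is exactly $\lambda_{\lambda_{z^*}(u)}(S_i)$. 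I expect this computation to be the technical heart of the argument; the only point requiring genuine care is the identity $\lambda_u(z)=uzu^*$, which fails for general $z\in\F_n$ and uses the hypothesis $z\in\U(\F_n^1)$ in an essential way.

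For part (i) I would rewrite $\lambda_u(\A)\subseteq\A$ as $\lambda_u(\lambda_z(\D_n))\subseteq\lambda_z(\D_n)$, apply $\lambda_{z^*}$ to both sides, and use $\lambda_{z^*}\circ\lambda_z=\id$ to obtain the equivalent inclusion $(\lambda_{z^*}\circ\lambda_u\circ\lambda_z)(\D_n)\subseteq\D_n$; by the conjugation identity this is precisely $\lambda_{\lambda_{z^*}(u)}(\D_n)\subseteq\D_n$. The ``in particular'' clause then follows from the fact, recalled in the introduction, that a unitary in $\N_{\O_n}(\D_n)$ gives rise to an endomorphism preserving $\D_n$, applied here to $\lambda_{z^*}(u)$.

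For part (ii) the same conjugation by $\lambda_{z^*}$ turns $u\A u^*\subseteq\A$ into $v\D_n v^*\subseteq\D_n$ with $v:=\lambda_{z^*}(u)$. Here the decisive step will be a maximality argument: since $v$ is unitary, $v\D_n v^*$ is again a MASA in $\O_n$, so an inclusion of two MASAs is forced to be an equality, whence $v\in\N_{\O_n}(\D_n)$; the converse direction is immediate by applying $\lambda_z$. This maximality observation is exactly what distinguishes (ii) from (i): the inner automorphism $\Ad u$ carries $\A$ onto a full MASA, so the inclusion compels normalization, whereas the merely injective endomorphism $\lambda_u$ produces an image $\lambda_{\lambda_{z^*}(u)}(\D_n)$ that need not exhaust $\D_n$, leaving only the weaker inclusion condition.
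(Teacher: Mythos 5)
Your proposal is correct and takes essentially the same route as the paper's proof: the same conjugation identity $\lambda_{z^*}\circ\lambda_u\circ\lambda_z=\lambda_{\lambda_{z^*}(u)}$, verified on the generators $S_i$ using $\lambda_u(z)=uzu^*$ for $z\in\U(\F_n^1)$, followed by the same transport of both inclusions from $\A$ to $\D_n$. The only difference is that in (ii) you make explicit the maximality argument (the MASA $v\D_n v^*$ sitting inside the abelian algebra $\D_n$ forces equality), a point the paper's one-line proof of (ii) leaves implicit.
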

\begin{proof}
(i) Inclusion $\lambda_u(\A)\subseteq\A$ is equivalent to
$\lambda_z^{-1}\lambda_u\lambda_z(\D_n)\subseteq \D_n$. But for $i=1,\ldots,n$ we have
$$ \lambda_z^{-1}\lambda_u\lambda_z(S_i)=\lambda_{z^*}\lambda_u(zS_i)=
\lambda_{z^*}\lambda_u(z)\lambda_u(S_i) = \lambda_{z^*}(uzu^*\cdot uS_i)=\lambda_{z^*}(uzS_i) $$
$$=\lambda_{z^*}(u)\lambda_{z^*}(zS_i)
= \lambda_{z^*}(u)\lambda_{z^*}\lambda_z(S_i)
= \lambda_{z^*}(u)S_i, $$
and thus $\lambda_z^{-1}\lambda_u\lambda_z=\lambda_{\lambda_{z^*}(u)}$. As shown in \cite{Cun2},
if $\lambda_{z^*}(u)$ belongs to the normalizer of $\D_n$ then the endomorphism
$\lambda_{\lambda_{z^*}(u)}$ globally preserves $\D_n$.

(ii) We have $\lambda_z^{-1}{\rm Ad}u\lambda_z={\rm Ad}\lambda_{z^*}(u)$.
\end{proof}

Consider a simple case of $u\in\U(\F_n^1)$. Then $\lambda_{z^*}(u)=z^*uz$ normalizes $\D_n$ if and only if $z^*uz$ is in
$\P_n^1\U(\D_n^1)$. That is, in order to find MASAs of the form $\lambda_z(\D_n)$ which are globally invariant for $\lambda_u$ we
would need to solve the generalized diagonalization problem for $u$: find unitaries $z\in\U(\F_n^1)$ such that
$$ z^*uz=sd, $$
where $s\in\P_n^1$ is a permutation and $d\in\U(\D_n^1)$ is diagonal. More generally, if
$u\in\U(\F_n^k)$, then we would be looking to solve the equation
$$ z_k^*uz_k=sd, $$
where $s\in\P_n^k$, $d\in\U(\D_n^k)$, and $z_k$ is a unitary defined for $z$ in formula (\ref{uk}).

\smallskip
The following example is a rephrasing in
this language of the construction on pages 127--128 in \cite{SZ}.

\begin{example}\label{skalski}
\rm Consider
$$ u=S_1(S_1S_2^*+S_2S_1^*)S_1^*+P_2, $$
a permutation unitary in $\U(\F_2^2)$.
In particular, the diagonal $\D_2$ is invariant under $\lambda_u$. Take
$$ z=(1/\sqrt{2})(1-S_1S_2^*+S_2S_1^*), $$
a unitary in $\U(\F_2^1)$. Then one checks that
$$ \lambda_{z^*}(u)=z_2^*uz_2=S_1S_1S_1^*S_1^*+S_1S_2S_2^*S_2^*+
S_2S_2S_2^*S_1^*+S_2S_1S_1^*S_2^* $$
is a permutation in $\P_2^2$, and thus $\A=\lambda_z(\D_2)$ is an invariant MASA for $\lambda_u$.
This MASA is different from the diagonal $\D_2$, since $z$ does not normalize $\D_2^1$. The endomorphism $\lambda_u$ does not preserve all standard masas: one can check that if $a \in [0,1], b = \sqrt{1-a^2}$, so that $z_a=aS_1 S_1^* - b S_1S_2^* + bS_2S_1^* + a S_2S_2^* $ is a unitary in $\F_2^1$, then $\lambda_u$ preserves $\lambda_{z_a}(\D_2)$ if and only if $a\in \{0,1,\frac{1}{2}\sqrt{2}\}$.
\end{example}

\begin{remark}\label{centralizer}
\rm Let $u\in\N_{\F_n^k}(\D_n^k)$ and denote $C(u)=\{w\in\U(\F_n^k):wu=uw\}$, the centralizer of
$u$ in $\U(\F_n^k)$. To find invariant MASAs for $\lambda_u$
one could look for unitaries $z\in\U(\F_n^1)$ such that $z_k\in C(u)\N_{\F_n^k}(\D_n^k)$.
If such a $z$ does not normalize $\D_n^1$ then $\A=\lambda_z(\D_n)$ is different from $\D_n$.
\end{remark}

\begin{example}\label{thompson}
\rm Consider the unitary
$$ u = S_1S_1S_1^*+S_1S_2S_1^*S_2^*+S_2S_2^*S_2^* $$
in $\SS_2$. The unitary $u$ is in the normalizer of the
diagonal, i.e.\ ${\rm Ad}u(\D_2)=\D_2$. We ask if there exist other standard MASAs in $\O_2$ invariant under ${\rm Ad}u$. So let
$\theta\in[0,2\pi)$, $a,b\in{\mathbb C}$ with $|a|^2+|b|^2=1$, and consider the unitary
$$ z=e^{i\theta}aS_1S_1^*+e^{i\theta}bS_2S_1^*-\overline{b}S_1S_2^*+\overline{a}S_2S_2^* $$
in $\U(\F_2^1)$. According to Proposition
\ref{normal-masa}, we must determine for what $z$ the unitary $\lambda_{z^*}(u)$ belongs to the normalizer of $\D_2$. In
particular, for each projection $p\in\D_2$ we must have $\lambda_{z^*}(u)p \lambda_{z^*}(u)^*\in\D_2$. Decomposing
$\lambda_{z^*}(u)p\lambda_{z^*}(u)^*$ as a sum of terms belonging to spectral subspaces $\O_2^{(k)}$, $k\in\Z$, we get that the
term in $\O_2^{(2)}$ is $\lambda_{z^*}(S_1S_1S_1^*)p\lambda_{z^*}(S_2S_2S_2^*)$. Writing this expression explicitly for
$p=S_1S_1S_1^*S_1^*$ (for example) we see that it is equal to $0$ if and only if either $a=0$ or $b=0$. But then $z$ normalizes
$\D_2$ and $\lambda_z(\D_2)=\D_2$. Therefore the only standard MASA of $\O_2$ invariant under ${\rm Ad}u$ is the diagonal $\D_2$.
\end{example}

As it turns out, there exist unitary elements of $\F_n^k$ whose associated endomorphisms do not leave invariant any standard MASA
of $\O_n$. In Section \ref{izumi}, below, we show that this is the case for a certain endomorphism of $\O_2$ constructed by Izumi
(\cite{I}) in connection with Longo's sector theory (\cite{L2}) and Watatani's index theory for $C^*$-algebras (\cite{W}).

\section{Endomorphisms preserving the diagonal}\label{enddiag}

Condition (i) of Proposition \ref{normal-masa} motivates the following question: for what
unitaries $w\in\U(\O_n)$ we have $\lambda_w(\D_n)\subseteq\D_n$? Certainly this is the case
for those unitaries which normalize $\D_n$. An interesting question is what other unitaries,
not belonging to ${\mathcal N}_{\O_n}(\D_n)$, give rise to endomorphisms which preserve
the diagonal? Clearly, $\lambda_w(\D_n)\subseteq\D_n$ if and only if $\lambda_w(\D_n^1)
\subseteq\D_n$ and $\lambda_w\varphi^r(\D_n^1)\subseteq\D_n$ for all $r=1,2,\ldots$.
Unfortunately, these conditions are not easy to check in general. However, if $w$ is a unitary
in the algebraic part of $\F_n$ then they reduce to a finite calculation, as explained
in the following theorem.

\begin{theorem}\label{finitecond}
Let $k \in \Naturali$ and let $w\in\U(\F_n^k)$. For $i,j=1,\ldots,n$ let $E_{ij}:\F_n^k \rightarrow \F_n^{k-1}$ be linear maps
determined by the condition that $a=\sum_{i,j=1}^n E_{ij}(a)\varphi^{k-1}(S_iS_j^*)$ for all $a\in\F_n^k$. Define by induction an
increasing sequence of unital selfadjoint subspaces $\mathfrak{S}_r$ of $\F_n^{k-1}$ so that
\[ \begin{aligned}
\mathfrak{S}_1 & = \span\{ E_{jj}(wxw^*): x\in\D_n^1,\; j=1,\ldots,n \}, \\
\widetilde{\mathfrak{S}}_{r+1} & = \span\{ E_{jj}((\Ad w\circ\varphi)(x)):
x\in\mathfrak{S}_r,\; j=1,\ldots,n\}, \\
\mathfrak{S}_{r+1} & = \mathfrak{S}_r + \widetilde{\mathfrak{S}}_{r+1}.
\end{aligned} \]
We agree that $\mathfrak{S}_0={\mathbb C}1$.
Let $R$ be the smallest integer such that $\mathfrak{S}_R=\mathfrak{S}_{R-1}$. Then $\lambda_w(\D_n)
\subseteq\D_n$ if and only if $\lambda_w(\D_n^R)\subseteq\D_n$.
\end{theorem}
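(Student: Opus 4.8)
The plan is to collapse the infinite family of inclusions $\lambda_w(\varphi^r(\D_n^1))\subseteq\D_n$, $r\ge 0$ (to which $\lambda_w(\D_n)\subseteq\D_n$ is equivalent, as observed just before the theorem) into a recursion that lives entirely inside the fixed finite-dimensional algebra $\F_n^{k-1}$. First I would introduce the transfer map $c_r\colon\F_n^{k-1}\to\F_n^{r+k-1}$, $c_r(b)=w_r\varphi^r(b)w_r^*$ (with $c_0=\id$), and use the identity $w_{r+1}=w_r\varphi^r(w)$ to write $\lambda_w(\varphi^r(S_iS_i^*))=w_r\varphi^r(wS_iS_i^*w^*)w_r^*$. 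The key structural input is that $w_r\in\F_n^{r+k-1}$ commutes with each top-factor matrix unit $\varphi^{r+k-1}(S_sS_t^*)$, since those units live in a copy of $M_n(\cc)$ that commutes with the factors supporting $w_r$. Expanding $wS_iS_i^*w^*$ and $(\Ad w\circ\varphi)(b)$ in the system $\varphi^{k-1}(S_sS_t^*)$ by means of the maps $E_{st}$ then yields the peeling identity $c_{r+1}(b)=\sum_{s,t}c_r\!\big(E_{st}((\Ad w\circ\varphi)(b))\big)\varphi^{r+k-1}(S_sS_t^*)$, and an analogous expansion of $\lambda_w(\varphi^r(S_iS_i^*))$ with coefficients $E_{st}(wS_iS_i^*w^*)$.

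Next I would read off membership in $\D_n$ from these expansions. Because the $\varphi^{r+k-1}(S_sS_t^*)$ form matrix units for the top tensor factor, an element $\sum_{s,t}y_{st}\varphi^{r+k-1}(S_sS_t^*)$ with $y_{st}\in\F_n^{r+k-1}$ lies in $\D_n$ exactly when $y_{st}=0$ for $s\ne t$ and $y_{ss}\in\D_n$ for all $s$. Since each $c_r$ is injective (conjugation by a unitary composed with the injective endomorphism $\varphi^r$), the off-diagonal vanishing $c_r(E_{st}(\cdots))=0$ is equivalent to $E_{st}(\cdots)=0$, a condition that does not depend on $r$; meanwhile the diagonal requirement reduces to asking that the coefficients $E_{ss}((\Ad w\circ\varphi)(b))$ — precisely the generators of the spaces $\widetilde{\mathfrak{S}}_{r+1}$ — satisfy the same membership condition one recursion-level lower. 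Writing $T$ for the operation $\mathcal V\mapsto\span\{E_{ss}((\Ad w\circ\varphi)(b)):b\in\mathcal V,\ s=1,\dots,n\}$, this produces the clean statement: ``$c_r(b)\in\D_n$ for all $b\in\mathcal V$'' holds if and only if the off-diagonal conditions on $\mathcal V,T\mathcal V,\dots,T^{r-1}\mathcal V$ all hold and $T^r\mathcal V\subseteq\D_n$.

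I would then unfold this starting from $\mathfrak{S}_1=\span\{E_{jj}(wxw^*):x\in\D_n^1,\ j=1,\dots,n\}$, the space governing the outermost peeling of $\varphi^r(S_iS_i^*)$, and use $\mathfrak{S}_m=\sum_{j=0}^{m-1}T^j\mathfrak{S}_1$. This gives that $\lambda_w(\varphi^r(\D_n^1))\subseteq\D_n$ for all $r\le m-1$ is equivalent to $\mathfrak{S}_m\subseteq\D_n$ together with the off-diagonal conditions carried by $\mathfrak{S}_{m-1}$ (plus the fixed, $r$-independent off-diagonal condition on the elements $wxw^*$). The sequence $(\mathfrak{S}_r)$ is increasing inside the finite-dimensional $\F_n^{k-1}$, so once $\mathfrak{S}_R=\mathfrak{S}_{R-1}$ we get $T\mathfrak{S}_{R-1}\subseteq\mathfrak{S}_{R-1}$ and hence $\mathfrak{S}_r=\mathfrak{S}_{R-1}$ for all $r\ge R-1$. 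Consequently every condition indexed by $r\ge R$ is already subsumed by those with $r\le R-1$, that is, by $\lambda_w(\D_n^R)\subseteq\D_n$; the reverse implication is trivial since $\D_n^R\subseteq\D_n$.

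The step I expect to demand the most care is the second one: showing that the off-diagonal coefficients must genuinely \emph{vanish} (not merely lie in $\D_n$) and that the diagonal part closes inside $\F_n^{k-1}$ under $T$. This rests on the tensor-factor bookkeeping — that $E_{st}$ extracts exactly the top $M_n(\cc)$-factor, that $w_r$ is supported on the complementary factors and therefore commutes with it, and that $c_r$ is injective — and on matching the unfolded off-diagonal and diagonal conditions precisely to the definition of the $\mathfrak{S}_r$, so that the stabilization index $R$ is indeed the correct cutoff.
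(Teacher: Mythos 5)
Your proposal is correct and is essentially the paper's own argument: both proofs peel off the top matrix-unit factor via the maps $E_{st}$ (using that $w_r$ commutes with $\varphi^{r+k-1}(\F_n^1)$), arrive at the recursion $\lambda_w\varphi^r=(\Ad w\circ\varphi)\lambda_w\varphi^{r-1}$ whose coefficient spaces are exactly the $\mathfrak{S}_r$, and conclude from stabilization of this increasing sequence inside the finite-dimensional algebra $\F_n^{k-1}$. The only difference is bookkeeping: you establish an unconditional level-by-level equivalence in which the off-diagonal vanishing appears as an explicit, $r$-independent condition, whereas the paper runs a one-directional induction (its conditions (i)--(v)) under the hypothesis $\lambda_w(\D_n^R)\subseteq\D_n$, with the off-diagonal vanishing absorbed into its purely diagonal expansion formula.
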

\begin{proof}
Let $\lambda_w(\D_n^R)\subseteq\D_n$. We show by induction on $r=1,2,\ldots$ that
\begin{description}
\item[\hspace{2mm}{\rm (i)}] $\mathfrak{S}_r \subseteq \D_n^{k-1}$;
\item[\hspace{1mm}{\rm (ii)}] $(\Ad w \circ \varphi)(\mathfrak{S}_{r-1})\subseteq
\mathfrak{S}_r\varphi^{k-1}(\D_n^1)$;
\item[{\rm (iii)}] $\lambda_w\varphi^{r-1}(\D_n^1) \subseteq\D_n$;
\item[{\rm (iv)}] for each $x\in\D_n^1$ and $r$ there exist
elements $x_{j_1\ldots j_r}$ of $\mathfrak{S}_r$ such that
\begin{equation}\label{x}
\lambda_w\varphi^{r-1}(x) = \sum_{j_1,\ldots,j_r=1}^n x_{j_1\ldots j_r}
\varphi^{k-1}(P_{j_r})\varphi^k(P_{j_{r-1}})\cdots\varphi^{k+r-2}(P_{j_1});
\end{equation}
\item[\hspace{1mm}{\rm (v)}] $\mathfrak{S}_r$ is spanned by $\mathfrak{S}_{r-1}$ and
elements $x_{j_1\ldots j_r}$ appearing in formula (\ref{x}) for all $x\in\D_n^1$.
\end{description}
Base step $r=1$. By hypothesis, $\lambda_w(\D_n^1)=w\D_n^1w^*$ is contained in $\D_n^k$.
Thus for $x\in\D_n^1$ we have
$$ \lambda_w(x)=\sum_{j=1}^n E_{jj}(wxw^*)\varphi^{k-1}(P_j) $$
and all $E_{jj}(wxw^*)$ must belong to $\D_n^{k-1}$. Whence $\mathfrak{S}_1\subseteq\D_n^{k-1}$ and formula (\ref{x}) holds for
$r=1$. Condition (v) follows.

For the inductive step, suppose the claim holds for all $m=1,\ldots,r$. Let $x\in\D_n^1$. Then
by the inductive hypothesis, and taking into account that $w$ commutes with the range of
$\varphi^l$ if $l\geq k$, we get
\[ \begin{aligned}
\lambda_w\varphi^r(x) & = (\Ad w \circ \varphi)\lambda_w\varphi^{r-1}(x) \\
 & = \sum_{j_1,\ldots,j_r=1}^n (\Ad w \circ \varphi)(x_{j_1\ldots j_r})
\varphi^k(P_{j_r})\cdots\varphi^{k+r-1}(P_{j_1})
\end{aligned} \]
for some $x_{j_1\ldots j_r}\in\mathfrak{S}_r$. Firstly, suppose that $r\leq R-1$. Then
$\varphi^r(x)\in\D_n^R$ and $\lambda_w\varphi^r(x)$ belongs to $\D_n$, by assumption. Thus
each $(\Ad w \circ \varphi)(x_{j_1\ldots j_r})$ is in $\D_n^k$ and we have
$$ (\Ad w \circ \varphi)(x_{j_1\ldots j_r}) = \sum_{j=1}^n E_{jj}
   ((\Ad w \circ \varphi)(x_{j_1\ldots j_r}))\varphi^{k-1}(P_j). $$
Setting
$$ x_{j_1\ldots j_r j_{r+1}}=E_{j_{r+1}j_{r+1}}((\Ad w \circ \varphi)(x_{j_1\ldots j_r})) $$
we see that formula (\ref{x}) holds for $r+1$, and also that
$\mathfrak{S}_{r+1}\subseteq\D_n^{k-1}$ and
$(\Ad w \circ \varphi)(\mathfrak{S}_r)\subseteq\mathfrak{S}_{r+1}\varphi^{k-1}(\D_n^1)$.
This establishes the inductive step in the present case.

Secondly, suppose that $r\geq R$. Then
all elements $x_{j_1\ldots j_r}$ belong to $\mathfrak{S}_r=\mathfrak{S}_{r-1}$. Thus,
by the inductive hypothesis, $(\Ad w \circ \varphi)(x_{j_1\ldots j_r})$ is in
$\mathfrak{S}_r\varphi^{k-1}(\D_n^1)\subseteq\D_n^{k-1}\varphi^{k-1}(\D_n^1)$.
Consequently, $\lambda_w\varphi^r(x)$ belongs to $\D_n$, and the rest of the argument is
similar to the preceding case.

Validity of condition (iii) for all $r \in \Naturali$ implies that $\lambda_w(\D_n) \subset \D_n$ and the proof is finished.
\end{proof}

The simplest possible case to consider from the point of view of Theorem \ref{finitecond}
is when already $\mathfrak{S}_1=\mathfrak{S}_0$. This leads to the following.

\begin{corollary}\label{easydninvariant}
Let $w$ be a unitary element of $\F_n^k$. If $w\D_n^1w^* =\varphi^{k-1}(\D_n^1)$
then $\lambda_w(\D_n)\subseteq\D_n$.
\end{corollary}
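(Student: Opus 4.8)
The plan is to recognize this as the degenerate case $\mathfrak{S}_1=\mathfrak{S}_0$ of Theorem~\ref{finitecond}, so that $R=1$, and then to discharge the single remaining base condition $\lambda_w(\D_n^1)\subseteq\D_n$ directly from the hypothesis. Thus the whole argument is an application of the theorem, with one short computation of the maps $E_{ij}$ on the shifted diagonal.

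First I would unwind the maps $E_{ij}$. Recall that $\F_n^k$ factors as $\F_n^{k-1}$ together with the copy of $M_n(\mathbb{C})$ spanned by the matrix units $\varphi^{k-1}(S_iS_j^*)$, and that $E_{ij}(a)\in\F_n^{k-1}$ are precisely the coefficients in the decomposition $a=\sum_{i,j}E_{ij}(a)\varphi^{k-1}(S_iS_j^*)$. Fix $x\in\D_n^1$. By hypothesis $wxw^*\in\varphi^{k-1}(\D_n^1)$, and since $\varphi^{k-1}(\D_n^1)=\span\{\varphi^{k-1}(P_i):i=1,\ldots,n\}$, I may write $wxw^*=\sum_i d_i\,\varphi^{k-1}(S_iS_i^*)$ with $d_i\in\mathbb{C}$. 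Comparing this with the defining decomposition and invoking its uniqueness (the $\varphi^{k-1}(S_iS_j^*)$ form a full system of matrix units over $\F_n^{k-1}$) gives $E_{ij}(wxw^*)=\delta_{ij}d_i 1$; in particular every $E_{jj}(wxw^*)$ is a scalar multiple of the unit of $\F_n^{k-1}$. Hence $\mathfrak{S}_1=\span\{E_{jj}(wxw^*):x\in\D_n^1,\ j=1,\ldots,n\}\subseteq\mathbb{C}1$. Taking $x=1$ gives $wxw^*=1=\sum_i\varphi^{k-1}(P_i)$, so $E_{jj}(1)=1$ and $1\in\mathfrak{S}_1$; therefore $\mathfrak{S}_1=\mathbb{C}1=\mathfrak{S}_0$.

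It follows that the smallest $R$ with $\mathfrak{S}_R=\mathfrak{S}_{R-1}$ is $R=1$, so Theorem~\ref{finitecond} reduces the assertion $\lambda_w(\D_n)\subseteq\D_n$ to checking $\lambda_w(\D_n^1)\subseteq\D_n$. For $x\in\D_n^1$ one has $\lambda_w(x)=w_1 x w_1^*=wxw^*$, since $w_1=w$ and the multi-indices involved have length one; thus $\lambda_w(\D_n^1)=w\D_n^1 w^*=\varphi^{k-1}(\D_n^1)$ by hypothesis, and because $\varphi$ preserves $\D_n$ and $\D_n^1\subseteq\D_n$ this lies in $\D_n$. The theorem then delivers $\lambda_w(\D_n)\subseteq\D_n$.

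There is no serious obstacle here: the entire content is observing that the hypothesis is exactly the condition $\mathfrak{S}_1=\mathfrak{S}_0$. The only point requiring a little care is the evaluation of $E_{ij}$ on elements of $\varphi^{k-1}(\D_n^1)$, where one must use uniqueness of the coefficient decomposition to force the off-diagonal coefficients to vanish and the diagonal ones to be scalars.
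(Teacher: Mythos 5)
Your proof is correct and follows exactly the paper's approach: recognize the hypothesis as forcing $\mathfrak{S}_1=\mathfrak{S}_0$ (hence $R=1$) and then apply Theorem~\ref{finitecond} after noting $\lambda_w(\D_n^1)=w\D_n^1w^*=\varphi^{k-1}(\D_n^1)\subseteq\D_n$. The paper states this in one line; you have merely filled in the routine verification of the $E_{ij}$ coefficients via uniqueness of the matrix-unit decomposition.
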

\begin{proof}
Indeed, if $w\D_n^1w^* =\varphi^{k-1}(\D_n^1)$ then $\mathfrak{S}_1=\mathfrak{S}_0$
and $\lambda_w(\D_n^1)\subseteq\D_n$.
\end{proof}

Corollary \ref{easydninvariant} and Proposition \ref{normal-masa} lead to the following explicit condition guaranteeing that the
endomorphism associated to a unitary in the algebraic part of $\F_n$ leaves a given standard MASA invariant.

\begin{corollary}\label{easyminvariant}
Let $u\in\F_n^k$ and $z\in\U(\F_n^1)$. If $u(z\D_n^1z^*)u^* =\varphi^{k-1}(z\D_n^1z^*)$
then $\A=\lambda_z(\D_n^1)$ is $\lambda_u$-invariant.
\end{corollary}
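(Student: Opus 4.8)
The plan is to reduce the statement to Corollary \ref{easydninvariant} by way of part (i) of Proposition \ref{normal-masa}. Writing $\A=\lambda_z(\D_n)$ and setting $w:=\lambda_{z^*}(u)$, that proposition tells us that $\lambda_u(\A)\subseteq\A$ is equivalent to $\lambda_w(\D_n)\subseteq\D_n$, so it suffices to establish the latter. Since $u\in\U(\F_n^k)$ and $\lambda_{z^*}$ restricts on $\F_n^k$ to the automorphism $\Ad z_k^*$ (being the inverse of $\lambda_z|_{\F_n^k}=\Ad z_k$), we have $w=z_k^* u z_k\in\U(\F_n^k)$. Thus Corollary \ref{easydninvariant} is applicable to $w$, and the whole statement will follow once I verify its hypothesis, namely that $w\D_n^1 w^* = \varphi^{k-1}(\D_n^1)$.

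To check this, I would first rewrite $w\D_n^1 w^* = z_k^*\,u\,(z_k\D_n^1 z_k^*)\,u^*\,z_k$. Using the factorization $z_k = z\,\varphi(z_{k-1})$ and the fact that $\varphi(z_{k-1})$ lies in the range of $\varphi$, which commutes with $\F_n^1\supseteq\D_n^1$, one gets $z_k\D_n^1 z_k^* = z\D_n^1 z^*$. The hypothesis $u\,(z\D_n^1 z^*)\,u^* = \varphi^{k-1}(z\D_n^1 z^*)$ then converts the expression into $z_k^*\,\varphi^{k-1}(z\D_n^1 z^*)\,z_k$, and the remaining task is purely algebraic: to see that conjugating $\varphi^{k-1}(z\D_n^1 z^*)$ by $z_k$ collapses back to $\varphi^{k-1}(\D_n^1)$.

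For this last step I would use the other factorization $z_k = z_{k-1}\varphi^{k-1}(z)$ together with the commutation relation, exploited repeatedly in the proof of Theorem \ref{finitecond}, that $\F_n^{k-1}$ commutes with the range of $\varphi^{k-1}$. Expanding $\varphi^{k-1}(z\D_n^1 z^*) = \varphi^{k-1}(z)\,\varphi^{k-1}(\D_n^1)\,\varphi^{k-1}(z)^*$ and substituting $z_k^* = \varphi^{k-1}(z)^* z_{k-1}^*$, every occurrence of $\varphi^{k-1}(z)$ can be moved past the unitary $z_{k-1}\in\F_n^{k-1}$, with which it commutes, so the $\varphi^{k-1}(z)$-factors cancel in conjugate pairs; what remains is a conjugation of $\varphi^{k-1}(\D_n^1)$ by $z_{k-1}$, which is trivial since $z_{k-1}$ likewise commutes with $\varphi^{k-1}(\D_n^1)\subseteq\varphi^{k-1}(\O_n)$. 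This yields $w\D_n^1 w^* = \varphi^{k-1}(\D_n^1)$, and Corollary \ref{easydninvariant} then finishes the argument. I expect the only delicate point to be this bookkeeping of commutations — tracking which factors of $z_k$ pass through the range of $\varphi^{k-1}$ — while the reduction itself is a direct appeal to Proposition \ref{normal-masa} and Corollary \ref{easydninvariant}.
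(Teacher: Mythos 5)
Your proposal is correct and follows exactly the route the paper intends: the paper states this corollary without a written proof, saying only that it follows from Corollary \ref{easydninvariant} and Proposition \ref{normal-masa}, and your argument --- reducing via Proposition \ref{normal-masa}(i) to $w=\lambda_{z^*}(u)=z_k^*uz_k$, showing $z_k\D_n^1z_k^*=z\D_n^1z^*$ and $z_k^*\varphi^{k-1}(z\D_n^1z^*)z_k=\varphi^{k-1}(\D_n^1)$ via the commutation of $\F_n^{k-1}$ with the range of $\varphi^{k-1}$, and then invoking Corollary \ref{easydninvariant} --- is precisely the omitted verification, carried out correctly.
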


\begin{example}\label{nonorm}
\rm One notices that Corollary \ref{easydninvariant} easily gives examples of unitaries
$w$ such that $\lambda_w(\D_n)\subseteq\D_n$ but $w\not\in\N_{\O_n}(\D_n)$. For
instance, if $a,b,c,d\in{\mathbb C}$ are such that $|a|^2+|b|^2=|c|^2+|d|^2=1$ then
\begin{eqnarray*}
w & = & S_1(aS_1S_1^*+bS_1S_2^*)S_1^* + S_1(cS_2S_1^*+dS_2S_2^*)S_2^* \\
  & + & S_2(-\overline{b}S_1S_1^*+\overline{a}S_1S_2^*)S_1^*
    + S_2(-\overline{d}S_2S_1^*+\overline{c}S_2S_2^*)S_2^*
\end{eqnarray*}
satisfies the condition of Corollary \ref{easydninvariant} (with $n=k=2$) and thus
$\lambda_w(\D_2)\subseteq\D_2$, but $w$ belongs to $\N_{\O_2}(\D_2)$ only if $ab=cd=0$.
\end{example}

Motivated by \cite{SZ} (where a similar construction was used when an endomorphism different from the canonical shift was shown
to reduce to the classical shift on a non-diagonal invariant standard MASA) we also consider the following sufficient condition which
guarantees that $\lambda_w(\D_n)\subseteq\D_n$:

\vspace{2mm}\noindent
[D1] $\lambda_w(\D_n^1)\subseteq\D_n$,

\vspace{2mm}\noindent
[D2] $\lambda_w\varphi^r(x)=\varphi^r\lambda_w(x)$ for all $x\in\D_n^1$ and $r=1,2,\ldots$

\vspace{2mm} A simple inductive argument shows that

\vspace{2mm}\noindent
[D2] $\Leftrightarrow$ $\displaystyle{w\in\bigcap_{r=1}^\infty
\left(\varphi^r\lambda_w(\D_n^1)\right)'\cap\O_n}$.

\vspace{2mm}\noindent
Thus, since $\F_n^k$ and $\varphi^k(\O_n)$
commute, for $w\in\U(\F_n^k)$ conditions [D1] and [D2] are equivalent to the following:

\vspace{2mm}\noindent
[DF1] $w\D_n^1w^*\subseteq\D_n^k$,

\vspace{2mm}\noindent
[DF2] $\displaystyle{w\in\bigcap_{r=1}^{k-1}\left(\varphi^r(w\D_n^1w^*)\right)'\cap\F_n^k}$.

\vspace{2mm}\noindent
In this way we obtain the following.

\begin{proposition}\label{dninvariant}
Let $w$ be a unitary element of $\F_n^k$. Denote by $B_w$ the $C^*$-algebra
generated by $\{\varphi^r(wxw^*): x\in\D_n^1,\,r=1,\ldots,k-1\}$. If
$$ w\D_n^1w^* \subseteq \D_n^k \;\;\; \text{and} \;\;\; w \in B_w'\cap\F_n^k $$
then $\lambda_w(\D_n)\subseteq\D_n$.
\end{proposition}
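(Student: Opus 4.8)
The plan is to observe that the two displayed hypotheses are precisely the conditions [DF1] and [DF2] isolated above, and then to run the chain of equivalences prepared in the preceding discussion: [DF1] and [DF2] are equivalent to [D1] and [D2], which in turn imply $\lambda_w(\D_n)\subseteq\D_n$. Indeed, $B_w'\cap\F_n^k$ is by definition the set of elements of $\F_n^k$ commuting with every generator $\varphi^r(wxw^*)$, $x\in\D_n^1$, $1\le r\le k-1$, so $w\in B_w'\cap\F_n^k$ is exactly [DF2]; and since $\lambda_w(\D_n^1)=w\D_n^1w^*\subseteq\F_n^k$ while $\D_n\cap\F_n^k=\D_n^k$, the containment $w\D_n^1w^*\subseteq\D_n^k$ is exactly condition [D1] (equivalently [DF1]). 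Thus the substance is already assembled, and it remains to justify the two equivalences.

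For the final implication I would first recall the reduction noted just before Theorem \ref{finitecond}: $\lambda_w(\D_n)\subseteq\D_n$ holds iff $\lambda_w(\D_n^1)\subseteq\D_n$ and $\lambda_w\varphi^r(\D_n^1)\subseteq\D_n$ for all $r\ge1$. Condition [D1] supplies the first requirement. For the remaining ones, [D2] gives $\lambda_w\varphi^r(\D_n^1)=\varphi^r\lambda_w(\D_n^1)$, and since $\varphi$ preserves $\D_n$ we obtain $\varphi^r\lambda_w(\D_n^1)\subseteq\varphi^r(\D_n)\subseteq\D_n$; hence [D1] and [D2] together force the conclusion.

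The one genuinely structural point is the equivalence between [D2] and the finite commutation condition [DF2]. Here the engine is the identity $\lambda_w\varphi(a)=w\varphi(\lambda_w(a))w^*$, immediate from $\lambda_w(S_i)=wS_i$ and multiplicativity. An induction on $r$ then shows that, granting $\lambda_w\varphi^{r-1}(x)=\varphi^{r-1}\lambda_w(x)$, one has $\lambda_w\varphi^r(x)=w\varphi^r(\lambda_w(x))w^*$, so [D2] at level $r$ amounts to $w$ commuting with $\varphi^r(\lambda_w(\D_n^1))$; this is the reformulation [D2] $\Leftrightarrow w\in\bigcap_{r\ge1}(\varphi^r\lambda_w(\D_n^1))'$. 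The main obstacle is to see that these infinitely many constraints collapse to the finitely many encoded in $B_w$: for $r\ge k$ one has $\varphi^r(\lambda_w(\D_n^1))\subseteq\varphi^k(\O_n)$, which commutes with $w\in\F_n^k$ automatically because $\F_n^k$ and $\varphi^k(\O_n)$ commute, so only $r=1,\dots,k-1$ impose a real condition, and those are exactly the generators of $B_w$. I expect the degree bookkeeping in this cutoff at $k-1$ to be the only place demanding care.
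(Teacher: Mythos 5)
Your proof is correct and takes essentially the same route as the paper, which states this proposition without a separate proof precisely because it is meant to follow from the preceding discussion of [D1], [D2], [DF1], [DF2]; your write-up simply supplies the details the paper leaves implicit (the inductive identity $\lambda_w\varphi^r(x)=w\varphi^r(\lambda_w(x))w^*$ behind the equivalence of [D2] with the commutant condition, and the cutoff at $r=k-1$ coming from the commutation of $\F_n^k$ with $\varphi^k(\O_n)$). There is nothing to correct.
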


A very special case of the situation described in the preceding proposition takes
place when $w\in\U(\F_n^k)$ and $w\D_n^1w^* =\varphi^{k-1}(\D_n^1)$, the case considered
in Corollary \ref{easydninvariant}.

\section{Endomorphisms commuting with\\  Bogolyubov automorphisms}\label{bogolyubov}

The methods of computing entropy of permutation endomorphisms of $O_n$ used in \cite{SZ} were based on analysis of the behaviour
of the transformations in question on invariant standard MASAs. It was noted there that even in simple cases entropy properties
of $\rho|_{\mathcal{A}_1}$ and $\rho|_{\mathcal{A}_2}$ for two invariant standard MASAs $\mathcal{A}_1$ and $\mathcal{A}_2$ can
be very different. In this section, we discuss endomorphisms of $\O_n$ whose restrictions to all standard MASAs are identical.

We begin with a simple and well-known observation:

\begin{lemma}\label{commasa}
Let $u\in\U(\O_n)$. If the endomorphism $\lambda_u$ commutes with the gauge action, then $u \in \F_n$.
\end{lemma}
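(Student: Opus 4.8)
The plan is to exploit the fact, implicit in the Cuntz correspondence recorded in Section~2, that $\O_n$ is generated by the isometries $S_1,\ldots,S_n$: any two $*$-homomorphisms out of $\O_n$ that agree on these generators must coincide. Hence, for a fixed $t\in U(1)$, the two endomorphisms $\lambda_u\circ\gamma_t$ and $\gamma_t\circ\lambda_u$ are equal if and only if they agree on each $S_i$, so the commutation hypothesis $\lambda_u\circ\gamma_t=\gamma_t\circ\lambda_u$ may be tested on generators alone. I would therefore simply evaluate both composites on $S_i$ and compare.

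The computation is short. On the one hand, $\lambda_u(\gamma_t(S_i))=\lambda_u(tS_i)=t\,uS_i$, using linearity of $\lambda_u$ together with $\gamma_t(S_i)=tS_i$. On the other hand, since $\gamma_t$ is an automorphism, $\gamma_t(\lambda_u(S_i))=\gamma_t(uS_i)=\gamma_t(u)\gamma_t(S_i)=t\,\gamma_t(u)S_i$. Equating the two expressions and cancelling the scalar $t$ yields $uS_i=\gamma_t(u)S_i$ for every $i=1,\ldots,n$ and every $t\in U(1)$.

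To conclude I would avoid cancelling the isometry $S_i$ directly; instead I multiply the last identity on the right by $S_i^*$, sum over $i$, and invoke the Cuntz relation $\sum_{i=1}^n S_iS_i^*=I$. This gives $u=\sum_i uS_iS_i^*=\sum_i\gamma_t(u)S_iS_i^*=\gamma_t(u)$ for all $t$. Thus $u$ is fixed by the entire gauge action, and since $\F_n=\O_n^{(0)}$ is precisely the fixed-point algebra of $\gamma$, it follows that $u\in\F_n$. There is no genuine obstacle here: the only two points deserving a word of care are the reduction to generators and the use of $\sum_i S_iS_i^*=I$ in place of a naive cancellation of $S_i$, and both are routine.
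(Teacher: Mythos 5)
Your proof is correct and follows essentially the same route as the paper: evaluate both composites on the generators $S_i$ to obtain $\gamma_t(u)S_i = uS_i$, then recover $\gamma_t(u)=u$ (your multiplication by $S_i^*$ and summation via $\sum_i S_iS_i^*=I$ just makes explicit a step the paper leaves implicit), and conclude from $\F_n$ being the fixed-point algebra of $\gamma$. No issues.
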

\begin{proof}
The assumed commutation relation implies that for each $i=1,\ldots,n$ and $t \in U(1)$ we have
\[ \gamma_t(u) \gamma_t(S_i) = u \gamma_t(S_i).\]
This implies that $\gamma_t(u) = u$ for all $t\in U(1)$ and thus $u\in\F_n$.
\end{proof}

By virtue of the above lemma, endomorphisms commuting with the gauge action leave the core UHF subalgebra
$\F_n$ invariant. Interesting examples of endomorphisms of $\O_n$ that leave $\F_n$ invariant and yet
their associated unitaries are not in $\F_n$ were recently found in \cite{CRS}.

We denote by $\sg(k)$ the group of all permutations of $k$ letters.

\begin{definition}\label{induced}
\rm Let $\sigma$ be a permutation of $W_n^k$. We call $\sigma$ {\em induced} if there exists a permutation
$\omega \in \sg(k)$ such that for all $\alpha \in W_n^k$ we have
\[ \sigma (\alpha) = (\alpha_{\omega(1)}, \ldots, \alpha_{\omega(k)}).\]
\end{definition}

The following fact is a consequence of the classical theorem of Hermann Weyl.

\begin{lemma}\label{Weyl}
Let $k \in {\mathbb N}$ and let $u$ be a unitary in $\F_n^k$. The endomorphism $\lambda_u$ commutes
with all Bogolyubov automorphisms of $\O_n$ if and only if $u$ is a linear combination of permutation unitaries
associated with induced permutations of $W_n^k$.
\end{lemma}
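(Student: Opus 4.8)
The plan is to translate the commutation condition into a statement about how $u$ transforms under the natural action of $U(n)$ on $\F_n^k$, and then invoke Weyl's description of the commutant of this action. First I would make the Bogolyubov action explicit. For $z \in \U(\F_n^1) \cong U(n)$, the automorphism $\lambda_z$ restricts to $\F_n^k$, and on the generating tensor-slot $\span\{S_i\}$ it acts as the matrix $z$; hence on $\F_n^k \cong M_n(\cc)^{\otimes k} \cong \en((\cc^n)^{\otimes k})$ it acts as $\Ad(z^{\otimes k})$, the $k$-fold tensor power representation of $U(n)$. The key computational step is to show that for $u \in \U(\F_n^k)$ the condition ``$\lambda_u$ commutes with every Bogolyubov automorphism $\lambda_z$'' is equivalent to ``$u$ is fixed by this $\Ad(z^{\otimes k})$-action for all $z$'', i.e.\ $z^{\otimes k} u (z^{\otimes k})^* = u$ for all $z \in U(n)$. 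This should follow by evaluating $\lambda_z^{-1}\lambda_u\lambda_z$ on generators exactly as in the proof of Proposition \ref{normal-masa}(i): one computes $\lambda_z^{-1}\lambda_u\lambda_z = \lambda_{\lambda_{z^*}(u)}$, so commutation for all $z$ amounts to $\lambda_{z^*}(u) = u$, and since $u \in \F_n^k$ one identifies $\lambda_{z^*}(u)$ with $(z^*)^{\otimes k}\,u\,(z^{\otimes k})$ under the isomorphism above.

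Having reduced to fixed vectors of the representation, I would apply Weyl. The classical theorem of Hermann Weyl (Schur--Weyl duality) states that the commutant of $\{z^{\otimes k} : z \in U(n)\}$ inside $\en((\cc^n)^{\otimes k})$ is spanned by the operators implementing the permutation action of $\sg(k)$ on the $k$ tensor factors. An element $u$ satisfies $z^{\otimes k} u = u z^{\otimes k}$ for all $z$ precisely when $u$ lies in this commutant, hence $u$ is a linear combination of the tensor-slot permutation operators $\{P_\omega : \omega \in \sg(k)\}$, where $P_\omega$ permutes the factors of $(\cc^n)^{\otimes k}$ according to $\omega$.

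The remaining step is bookkeeping: identifying the tensor-slot permutation operators $P_\omega$ with the permutation unitaries of $\F_n^k$ associated to \emph{induced} permutations. Under the standard identification of $S_\alpha S_\beta^*$ with the matrix unit $e_{\alpha\beta}$ on $(\cc^n)^{\otimes k}$, the operator $P_\omega$ acts by $P_\omega(e_{\alpha_1}\otimes\cdots\otimes e_{\alpha_k}) = e_{\alpha_{\omega^{-1}(1)}}\otimes\cdots\otimes e_{\alpha_{\omega^{-1}(k)}}$, which is exactly the permutation unitary $\sum_{\alpha} S_{\sigma(\alpha)}S_\alpha^*$ with $\sigma(\alpha) = (\alpha_{\omega(1)},\ldots,\alpha_{\omega(k)})$ (up to the convention $\omega$ versus $\omega^{-1}$, which I would fix carefully). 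This is precisely Definition \ref{induced}. Thus the $\span$ of induced permutation unitaries coincides with the span of the $P_\omega$, and the equivalence follows.

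The main obstacle I anticipate is the careful verification that the Bogolyubov action $\lambda_z$ on $\F_n^k$ is genuinely the tensor-power representation $\Ad(z^{\otimes k})$ and not some twisted variant, together with tracking conventions (whether slots are permuted by $\omega$ or $\omega^{-1}$, and left versus right action) so that the resulting unitaries match Definition \ref{induced} exactly. The Weyl step itself is a black box; the real work is making the two identifications---``$\lambda_z$ on $\F_n^k$ equals $\Ad(z^{\otimes k})$'' and ``tensor-slot permutations equal induced permutation unitaries''---precise and compatible.
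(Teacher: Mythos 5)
Your proposal is correct and takes essentially the same route as the paper: both arguments reduce the commutation condition to the matrix equation $z^{\otimes k} u \, (z^{\otimes k})^* = u$ under the identification $\F_n^k \cong M_n(\cc)^{\otimes k}$ (the paper via the convolution formula $\lambda_u \circ \lambda_z = \lambda_{\lambda_u(z)z\cdot}$, you via the conjugation identity $\lambda_z^{-1}\lambda_u\lambda_z = \lambda_{\lambda_{z^*}(u)}$ from Proposition \ref{normal-masa}, which are trivially equivalent reductions both resting on the bijectivity of $u \mapsto \lambda_u$), and then both invoke Schur--Weyl duality and identify the tensor-slot permutation operators with the unitaries of induced permutations. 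There is no substantive gap or difference of method.
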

\begin{proof}
Let $u$ be as in the statement of the lemma and let $z\in\U(\F_n^1)$, so that $\lambda_z$ is a Bogolyubov
automorphism of $\O_n$. By (\ref{convolution}), endomorphisms $\lambda_u$ and $\lambda_z$
commute if and only if
\[ \lambda_u(z)u = \lambda_z(u)z.  \]
Since $\lambda_u(z)=uzu^*$ and $\lambda_z(u)=z_kuz_k^*$ (with $z_k$ as in (\ref{uk})), this is equivalent to
\begin{equation}\label{uzk}
 u= z_k u z_k^*.
\end{equation}
Under the identification $\F_n^k \cong M_n(\cc)^{\otimes k}$, unitary $z_k$ corresponds to the tensor
product $z^{\otimes k}$. Then equation (\ref{uzk}) takes the form
\begin{equation} \label{comBog}
 u = z^{\otimes k} u z^{*\otimes k}.
\end{equation}
View now this equation as an equation for a fixed matrix $u \in M_n(\cc)^{\otimes k}$ and
arbitrary unitary $z\in M_n(\cc)$. It means that
$$ u\in \{ z^{\otimes k}:z\in U(n)\}'=\{ z^{\otimes k}:z\in SU(n)\}', $$
so that $u$ is in the commutant of
the $k$-th tensor power of the standard representation of $SU(n)$. A classical result of Weyl says that this
commutant is equal to the image of the natural representation of the permutation group
$\sg(k)$ in $M_n(\cc)^{\otimes k}$ (see  \S 9.1.1 in \cite{H}). This means that $u$ commutes with
all the Bogolyubov automorphisms if and only if it is a linear combination of unitary matrices representating the
permutations in $\sg(k)$. It remains to observe that under the identification of $\F_n^k$ with
$M_n(\cc)^{\otimes k}$ the latter matrices correspond to the unitaries associated
with induced permutations.
\end{proof}

Lemmas \ref{commasa} and \ref{Weyl} almost characterize unitaries that lead to endomorphisms commuting with all Bogolyubov
automorphisms;  we can also have $u \in \F_n \setminus \bigcup_{k\in {\mathbb N}} \F_n^k$, but this can happen only in a trivial
way (i.e.\ $u$ will be a norm limit of unitaries of the type described in Lemma \ref{Weyl}). This has been observed (in the
context of the UHF algebras) by Price in \cite{Price}. We formulate it in the next theorem; for the completeness we also present
the proof. Let
\begin{equation}\label{fixed}
{\bf F} = \bigcap_{z \in \U(\F_n^1)} \{ x\in\O_n: \lambda_z(x)=x \}.
\end{equation}

\begin{theorem}
Let $u \in \O_n$ be unitary. The endomorphism $\lambda_u$ commutes with all Bogolyubov
automorphisms of $\O_n$ if and only if $u$ is in the closed linear span of unitaries associated
with induced permutations.
\end{theorem}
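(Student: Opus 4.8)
The plan is to prove the equivalence by establishing the forward implication directly and the reverse implication via an approximation argument, using the two lemmas just proved together with the fixed-point space $\Fix$ defined in \eqref{fixed}. First I observe that the condition ``$\lambda_u$ commutes with all Bogolyubov automorphisms'' is, by the computation at the start of the proof of Lemma \ref{Weyl}, equivalent to the single algebraic identity $\lambda_z(u)=\lambda_u(z)u\,z^{-1}$ for every $z\in\U(\F_n^1)$; but more usefully I want to reduce the problem so that Lemma \ref{Weyl} can be applied. The key preliminary step is to show that any unitary $u$ satisfying the commutation condition must already lie in $\F_n$. This follows from Lemma \ref{commasa}: the gauge action $\gamma_t$ coincides with the Bogolyubov automorphism $\lambda_{tI}$ (scalar unitary $z=tI\in\U(\F_n^1)$), so commuting with all Bogolyubov automorphisms in particular forces $\lambda_u$ to commute with the gauge action, whence $u\in\F_n$.

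Once $u\in\F_n$ is known, I would analyze the commutation condition through the filtration $\F_n=\overline{\bigcup_k \F_n^k}$. The reformulation \eqref{comBog} shows that for each fixed $k$ the matrices of $\F_n^k$ fixed by all $z^{\otimes k}$ form exactly the Weyl commutant, i.e.\ the span of induced permutation unitaries; call this set $\Fix\cap\F_n^k$. The heart of the matter is to identify the global fixed-point space $\Fix=\bigcap_{z\in\U(\F_n^1)}\{x:\lambda_z(x)=x\}$ with the closed linear span of all induced-permutation unitaries. For the reverse implication this is essentially immediate: each induced permutation unitary is fixed by every $\lambda_z$ by Lemma \ref{Weyl}, the fixed-point conditions $\lambda_z(x)=x$ are norm-closed, and $\lambda_z$ is isometric, so any norm limit of such combinations again lies in $\Fix$; and membership in $\Fix$ is exactly the commutation condition since $\lambda_u$ commuting with $\lambda_z$ for all $z$ means $u$ is fixed by all $\lambda_z$ once we know $u\in\F_n$ and rewrite \eqref{uzk} as $\lambda_z(u)=u$.

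For the forward implication the main work—and the step I expect to be the genuine obstacle—is to show that a unitary $u\in\F_n\cap\Fix$ actually lies in the \emph{closed span of induced permutation unitaries}, rather than merely being a fixed point in some abstract sense. The plan is to employ the canonical conditional expectation $P$ onto $\Fix$ obtained by averaging over the Bogolyubov automorphisms, that is, integrating $\lambda_z$ against Haar measure on $SU(n)$ acting via $z\mapsto z^{\otimes k}$. Because this averaging is norm-continuous and maps each $\F_n^k$ onto its Weyl commutant $\Fix\cap\F_n^k$ (by Lemma \ref{Weyl}), approximating $u$ by elements $u_k\in\F_n^k$ and applying $P$ yields approximants $P(u_k)$ that lie in the span of induced permutation unitaries and converge to $P(u)=u$. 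The delicate point to handle carefully is the compatibility of these finite-level expectations with the inclusions $\F_n^k\hookrightarrow\F_n^{k+1}$ and the verification that $P$ is well defined and norm-contractive on all of $\F_n$; once that is settled, the density of $\bigcup_k(\Fix\cap\F_n^k)$ in $\Fix$ follows and $u$ is exhibited as the required norm limit, completing the proof.
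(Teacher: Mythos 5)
Your proposal is correct, and although it shares the paper's overall skeleton---reduce to $u\in\F_n$ via Lemma \ref{commasa} (the gauge automorphisms $\gamma_t=\lambda_{tI}$ being themselves Bogolyubov), rewrite commutation with $\lambda_z$ as the fixed-point condition $\lambda_z(u)=u$, and use (the proof of) Lemma \ref{Weyl} to identify $\Fix\cap\F_n^k$ with the span of induced permutation unitaries---it establishes the crucial density of $\bigcup_{k}\,(\Fix\cap\F_n^k)$ in $\Fix$ by a genuinely different device. The paper constructs the trace-preserving conditional expectations $E_k:\F_n\to\F_n^k$, checks that each $E_k$ commutes with every Bogolyubov automorphism, and deduces that $E_k$ maps $\Fix$ into $\Fix\cap\F_n^k$ while $E_k(u)\to u$. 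You instead Haar-average: $P(x)=\int_{SU(n)}\lambda_z(x)\,dz$ is a contractive projection onto the fixed-point algebra which preserves each $\F_n^k$ (because $\lambda_z(\F_n^k)=\F_n^k$) and restricts there to the projection onto the Weyl commutant; applying $P$ to finite-level approximants $u_k\to u$ gives $P(u_k)\in\Fix\cap\F_n^k$ with $P(u_k)\to P(u)=u$ by contractivity. The two arguments are dual to one another: the paper's projections respect the group action and compress along the filtration, yours respects the filtration and compresses along the group action; both hinge on the same commutation between the two structures. Your route needs point-norm continuity of $z\mapsto\lambda_z(x)$ and vector-valued integration over a compact group (routine, and you flag it); in exchange you never need the UHF trace $\tau$ or the maps $\iota_k,\theta_k$, and the ``compatibility with inclusions'' you worry about is a non-issue, since $P$ is a single globally defined map whose restriction to each $\F_n^k$ automatically lands in $\F_n^k$. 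Two minor simplifications: once $u\in\F_n$ is secured you may as well average over all of $U(n)$, which makes the identity $\{z^{\otimes k}:z\in U(n)\}'=\{z^{\otimes k}:z\in SU(n)\}'$ unnecessary; and the equivalence between commutation with all $\lambda_z$ and $\lambda_z(u)=u$ for all $z$ holds for an arbitrary unitary $u\in\O_n$ (since $\lambda_u(z)=uzu^*$ for $z\in\F_n^1$), so it does not require $u\in\F_n$ as a hypothesis.
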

\begin{proof}
By Lemmas \ref{commasa} and \ref{Weyl} together with their proofs it suffices to show that $\Fix \cap \bigcup_{k \in \Naturali}
\F_n^k$ is dense in $\Fix$. Let $\tau \in \F_n^*$ denote the faithful trace of the UHF algebra $\F_n$. Let $k \in \Naturali$.
With respect to the natural isomorphism $\iota_k: \F_n \to \F_n \otimes {M_n}^{\otimes k}$ we have $\tau = (\tau \otimes
\tr_{n^k})\circ \iota_k$, where $\tr_{n^k}$ is the normalised trace on ${M_{n}}^{\otimes k}$. The identification of
${M_{n}}^{\otimes k}$ with $\F_n^k$ in this picture corresponds to the mapping $\theta_k: {M_{n}}^{\otimes k} \to \F_n$ defined
by $\theta_k(x) = {\iota_k}^{-1}( 1_{\F_n} \otimes x)$, $ x \in {M_{n}}^{\otimes k}$. Consider the conditional expectation
$E_k:\F_n \to \F_n^k$ given by
\[ E_k = \theta_k \circ(\tau \otimes \id_{{M_{n}}^{\otimes k}}) \circ \iota_k.\]
As $\bigcup_{k \in \Naturali} \F_n^k$ is dense in $\F_n$,
\[
E_k(x) \stackrel{k \to \infty}{\longrightarrow} x, \;\;\;x \in \F_n.\] Let $\lambda_z$ be a Bogolyubov automorphism ($z \in \F^1_n$).
Then $E_k \circ \lambda_z = \lambda_z \circ E_k$ -- the commutation relation is easily checked on elements in $\F_n^l$ ($l\geq k$) and by
contractivity of the maps involved has to hold on the whole $\F_n$. This implies that for each $k \in \Naturali$ there is $E_k
(\Fix) = \Fix$, which together with the displayed formula yields the statement formulated in the first sentence of the proof.
\end{proof}

The above theorem, or rather its proof, shows that we have (recall the notation in \eqref{fixed})
\[\Fix \cap \O_n =  \Fix  \cap \F_n  \approx
C^*(\sg(\infty))\]
(with $\sg(\infty)$ denoting the group of finite permutations of ${\mathbb N}$), as noted by Bratteli and Evans in \cite{BE}.

\begin{corollary}\label{indcom}
Let $\sigma$ be a permutation of $W_n^k$. Then the corresponding endomorphism $\lambda_{\sigma}$
commutes with all Bogolyubov automorphisms of $\O_n$ if and only if $\sigma$ is induced.
\end{corollary}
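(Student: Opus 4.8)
The plan is to split the equivalence and do all the work on the nontrivial implication. One direction is immediate: if $\sigma$ is induced by some $\omega\in\sg(k)$, then the permutation unitary $u=\sum_{\alpha\in W_n^k}S_{\sigma(\alpha)}S_\alpha^*$ of \eqref{permutunitary} is \emph{itself} one of the induced-permutation unitaries, hence (a one-term) linear combination of such, so $\lambda_\sigma$ commutes with all Bogolyubov automorphisms by Lemma \ref{Weyl}. For the converse I would not use the ``linear combination'' form directly, but return to the equivalent relation established in the proof of Lemma \ref{Weyl}: under $\F_n^k\cong M_n(\cc)^{\otimes k}$, commutation of $\lambda_\sigma$ with every Bogolyubov automorphism is, by \eqref{comBog}, the same as $u=z^{\otimes k}\,u\,z^{*\otimes k}$ for all $z\in U(n)$, i.e.
\[ u\,z^{\otimes k}=z^{\otimes k}\,u\qquad\text{for all } z\in U(n). \]

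To turn this into combinatorics I would realise $\F_n^k$ on $(\cc^n)^{\otimes k}$ with orthonormal basis $e_\alpha=e_{\alpha_1}\otimes\cdots\otimes e_{\alpha_k}$, $\alpha\in W_n^k$, so that $u\,e_\alpha=e_{\sigma(\alpha)}$ and, writing $z\,e_v=\sum_w z_{wv}e_w$, $z^{\otimes k}e_\alpha=\sum_\beta\bigl(\prod_{i=1}^k z_{\beta_i\alpha_i}\bigr)e_\beta$. Comparing the $e_\delta$-coefficients of $u\,z^{\otimes k}e_\alpha$ and $z^{\otimes k}u\,e_\alpha$ and substituting $\delta=\sigma(\gamma)$ yields, for all $\gamma,\alpha\in W_n^k$ and all $z\in U(n)$,
\[ \prod_{i=1}^k z_{\gamma_i\alpha_i}=\prod_{i=1}^k z_{\sigma(\gamma)_i\,\sigma(\alpha)_i}. \]
Since $U(n)$ is Zariski dense in $M_n(\cc)$, these hold as polynomial identities in the entries $z_{wv}$, and two monomials in commuting variables coincide iff they have equal multisets of factors. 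Hence, for all $\gamma,\alpha\in W_n^k$,
\[ \{(\gamma_i,\alpha_i):1\le i\le k\}=\{(\sigma(\gamma)_i,\sigma(\alpha)_i):1\le i\le k\}\quad\text{as multisets,} \]
a relation I will call $(\star)$.

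The remaining task, and the main obstacle, is to extract from $(\star)$ a \emph{single} permutation $\omega\in\sg(k)$ of the positions with $\sigma(\alpha)_i=\alpha_{\omega(i)}$ for \emph{all} $\alpha$: taking $\gamma=\alpha$ in $(\star)$ only shows that $\sigma(\alpha)$ is some rearrangement of $\alpha$, and a priori the rearrangement could vary with $\alpha$. Choosing an $\alpha$ with pairwise distinct entries would pin $\omega$ down at once, but is possible only when $n\ge k$; to stay uniform in $n,k$ I would instead test $(\star)$ on indicator multi-indices taking values in $\{1,2\}$ (legitimate since $n>1$). Because $\sigma$ preserves the value-multiset (the $\gamma=\alpha$ case of $(\star)$), it sends each ``singleton'' multi-index $\eta^{(p)}$ (equal to $1$ at position $p$ and $2$ elsewhere) to some $\eta^{(\pi(p))}$, and $p\mapsto\pi(p)$ is injective, hence a permutation of $\{1,\dots,k\}$. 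Finally, applying $(\star)$ with $\gamma=\eta^{(p)}$ and arbitrary $\alpha$ and matching the unique pairs whose first coordinate equals $1$ gives $\alpha_p=\sigma(\alpha)_{\pi(p)}$, i.e. $\sigma(\alpha)_i=\alpha_{\pi^{-1}(i)}$ for every $i$ and every $\alpha$. Thus $\sigma$ is induced by $\omega=\pi^{-1}$, which completes the proof.
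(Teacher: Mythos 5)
Your proof is correct, but it follows a genuinely different route from the paper's. The paper disposes of the corollary in one line: by Lemma \ref{Weyl}, commutation with all Bogolyubov automorphisms means the permutation unitary lies in the linear span of induced-permutation unitaries, and it then invokes the ``simple observation'' that a permutation unitary in $\F_n^k$ can be a linear combination of permutation unitaries only in a trivial way, forcing it to be induced. You instead bypass Schur--Weyl duality entirely for the hard direction: you return to the elementary relation \eqref{comBog} (the point in the proof of Lemma \ref{Weyl} \emph{before} Weyl's theorem is applied), upgrade it via Zariski density of $U(n)$ in $M_n(\cc)$ to an identity of monomials, hence to the multiset condition $(\star)$, and then extract the position permutation combinatorially using the indicator words $\eta^{(p)}$. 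You also correctly identify and handle the genuine subtlety here: the $\gamma=\alpha$ case of $(\star)$ only shows that each word is rearranged by $\sigma$, with a rearrangement that could a priori vary from word to word, and your matching of the unique pair with first coordinate $1$ is what forces uniformity (and works for all $n\geq 2$, including $n<k$, where one cannot choose a word with pairwise distinct letters). This costs a longer computation but buys two things: your converse uses no representation theory, only linear algebra and combinatorics; and it in effect supplies a rigorous proof of the paper's unproven ``simple observation'' in the only regime where it is not obvious, since any permutation matrix in the span of induced ones commutes with all $z^{\otimes k}$ and your argument then applies to it directly. Your easy direction (an induced permutation unitary is a one-term linear combination, so Lemma \ref{Weyl} gives commutation) coincides with the paper's.
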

\begin{proof}
Follows from Lemma \ref{Weyl} and a simple observation that a permutation unitary in $\F_n^k$ can be a linear combination of
permutation unitaries in $\F_n^k$ only in a trivial way.
\end{proof}

Thus the class of induced permutation endomorphisms is the class of endomorphisms that not only leave each standard MASA
invariant, but also induce the same homeomorphisms of the underlying Cantor set in each case. When $k=2$, induced permutation
endomorphisms are the identity morphism and the canonical shift, but already when $k=3$ we get some new maps, for example the
endomorphism associated to the unitary $\sum_{i,j,l=1}^n S_l S_j S_i S_j^* S_i^* S_l^*$. Note that the product of induced
permutation endomorphisms is an induced permutation endomorphism, with the action on the level of permutations in
$\bigcup_{k\in{\mathbb N}}\sg(k)$ given by the convolution multiplication described by formula $(7)$ of \cite{S} (note that in
\cite{S} the convolution multiplication is defined for permutations of multi-indices). Each such endomorphism on the diagonal (or
any other standard MASA) reduces to a transformation which has a simple combinatorial description as a substitution map.

\begin{commento}\label{bogocomment}
\rm  In view of the discussion in \cite{Sk} and the simple form of the induced permutation endomorphisms it is natural to expect
that  for such endomorphisms their Voiculescu topological entropy can be detected already from the restriction to the canonical
diagonal MASA. We also conjecture  that for an endomorphism $\lambda$ associated to an induced permutation the entropies of $\lambda$
and $\lambda \circ \alpha$ coincide  for all Bogolyubov automorphisms $\alpha$. Note that in that situation we have $(\lambda \circ
\alpha)^m=\lambda^m \circ \alpha^m$. A combination of results in Section \ref{izumi}, below, and a straightforward symbolic dynamics
computation allowed to show in \cite{Sk2} that in general for an endomorphism $\lambda$ of $\O_n$ (even induced by a unitary in
$\F_n$) and a Bogolyubov automorphism $\alpha$ the entropies of $\lambda$ and $\lambda \circ \alpha$ may be different  (although, as
follows from \cite{DS}, the entropy of $\alpha$ is $0$).
\end{commento}

Finally, we observe that there exist endomorphisms preserving all standard MASAs which nevertheless do
not commute with all Bogolyubov automorphisms, as the following example demonstrates.

\begin{example}\label{nobogo}
\rm Let $k\geq 2$ and let $\sigma\in\sg(k)$ be such that $\sigma(1)=k$ and $\sigma(k)=1$. Let
$u_1$ be the unitary in $\F_n^k$ induced by $\sigma$, and let $u_2$ be an arbitray unitary
in $\varphi(\F_n^{k-1})$. Set $u=u_1 u_2$. Then for any $z\in\U(\F_n^1)$ we have
$\lambda_z(u_1)=u_1$ and $\lambda_z(u_2)\in\varphi(\F_n^{k-1})$. Note that for each $i,j=1,\ldots,n$
there is $u_1 S_i S_j^* = \varphi^{k-1} (S_i S_j^*) u_1$. Consequently,
$$ \lambda_z(u)\D_n^1\lambda_z(u^*)=u_1\lambda_z(u_2)\D_n^1\lambda_z(u_2^*)u_1^*=
u_1\D_n^1 u_1^* = \varphi^{k-1}(\D_n^1). $$
Thus $\lambda_u$ preserves all standard MASAs by Corollary \ref{easyminvariant}. However,
using freedom of choice of $u_2$, one can arrange that $u$ does not satisfy conditions of
Lemma \ref{Weyl} (or Corollary \ref{indcom}), and thus it does not commute with all
Bogolyubov automorphisms. A particular example of such situation is given by ($n=2)$
$u_1 = S_1S_1 S_1^* S_1^* + S_1 S_2 S_1^* S_2^* + S_2 S_1 S_2^* S_1^* + S_2 S_2 S_2 S_2^*$,
$u_2 =  S_1S_1 S_1^* S_2^* + S_1 S_2  S_1^* S_1^* + S_2 S_1 S_2 S_2^* + S_2 S_2 S_2^* S_1^*$.
The endomorphism $\lambda_{u_1u_2}$ is then a permutation endomorphism denoted by $\rho_{1342}$ in \cite{SZ}.
We recommend comparing its properties with these of the endomorphism described in Example \ref{skalski}.
\end{example}

\section{Izumi's examples of real sectors}\label{izumi}

In \cite{I}, Izumi studied certain explicit examples of endomorphisms
of Cuntz algebras motivated by subfactor theory.
His examples are square roots of canonical endomorphisms in the sense of Longo (\cite{L2}) with finite Watatani
indices (\cite{W}). Izumi's methods are based on the following proposition, which is a special case
of Proposition 2.5 in \cite{I}:

\begin{proposition}[\cite{I}]\label{quoted}
Let $\lambda \in \en(\O_n)$. If there exists an isometry $w \in \O_n$ such that $wx = \lambda^2(x) w$ for all
$x \in \O_n$ and $w^*\lambda(w)=\frac{1}{d}$ for some $d >0$, then $E_{\lambda}:\O_n \to \lambda(\O_n)$ defined by
\[ E_{\lambda}(x) = \lambda(w^* \lambda(x) w),\;\;x\in \O_n,\]
is a conditional expectation with Watatani index $d^2$.
\end{proposition}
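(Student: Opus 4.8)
**

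The plan is to verify that the map $E_\lambda(x) = \lambda(w^*\lambda(x)w)$ is a conditional expectation from $\O_n$ onto $\lambda(\O_n)$, and then to identify its Watatani index. First I would confirm the basic algebraic properties of $E_\lambda$. Linearity and $*$-preservation are immediate since $\lambda$ is a $*$-homomorphism and conjugation by the fixed element $w$ is linear and $*$-preserving. That the range lands in $\lambda(\O_n)$ is clear from the outermost application of $\lambda$. The crucial structural step is the bimodule (conditional expectation) property: for $a, b \in \O_n$ one must check $E_\lambda(\lambda(a)\,x\,\lambda(b)) = \lambda(a)\,E_\lambda(x)\,\lambda(b)$, and in particular that $E_\lambda$ restricts to the identity on $\lambda(\O_n)$. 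Here is where the intertwining hypothesis $wx = \lambda^2(x)w$ does the work: I would rewrite $w^*\lambda(\lambda(a)x)w$ by pulling $\lambda^2(a)$ past $w$. Explicitly, $\lambda(\lambda(a)\,x) = \lambda^2(a)\lambda(x)$, and since $w^*\lambda^2(a) = w^*\lambda^2(a)$ can be moved using the adjoint relation $\lambda^2(x)w = wx$, i.e. $w^*\lambda^2(a) = a w^*$, one gets $w^*\lambda(\lambda(a)x)w = a\,w^*\lambda(x)w$. Applying the outer $\lambda$ then yields $\lambda(a) E_\lambda(x)$, with the right-hand factor handled symmetrically.

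Next I would verify that $E_\lambda(1) = 1$, which together with positivity gives that $E_\lambda$ is a genuine (unital, norm-one) conditional expectation. Using $w^*\lambda(1)w = w^*w = 1$ (recall $w$ is an isometry), we get $E_\lambda(1) = \lambda(1) = 1$. Positivity follows because $E_\lambda$ is the composition of the $*$-homomorphism $\lambda$ with the completely positive map $x \mapsto w^*\lambda(x)w$ (compression by the isometry $w$ after applying $\lambda$), so $E_\lambda$ is in fact completely positive; combined with the bimodule property this makes it a conditional expectation onto $\lambda(\O_n)$.

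The final, and I expect most delicate, step is to compute the Watatani index and show it equals $d^2$. The Watatani index is tied to a quasi-basis: one seeks elements $u_i \in \O_n$ with $\sum_i u_i\, E_\lambda(u_i^* x) = x$ for all $x$, and the index is $\sum_i u_i u_i^*$. The natural candidates come from the Cuntz generators together with the isometry $w$; the hypothesis $w^*\lambda(w) = \tfrac{1}{d}$ is precisely the normalization that will make the index scalar and equal to $d^2$. I would look to exhibit a quasi-basis built from $\{\lambda(S_i)\}$ and $w$, using the defining relation $wx = \lambda^2(x)w$ to collapse the sums, and then use $w^*\lambda(w) = \tfrac 1 d$ to pin down the constant. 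The main obstacle is organizing this quasi-basis computation cleanly, since it requires manipulating nested applications of $\lambda$ against the intertwiner $w$; here the precise form of Izumi's Proposition 2.5 in \cite{I} should guide the bookkeeping, and since the statement is quoted as a special case of that proposition, I would ultimately defer the detailed index verification to the reference while supplying the conditional-expectation verification above.
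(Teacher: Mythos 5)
You should first know that the paper contains no proof of Proposition \ref{quoted} to compare yours against: it is stated purely as a quotation of (a special case of) Proposition 2.5 of \cite{I}, with the entire verification deferred to Izumi. Judged on its own merits, the conditional-expectation half of your argument is correct and complete: taking adjoints in $wx=\lambda^2(x)w$ gives $w^*\lambda^2(a)=aw^*$, which yields the $\lambda(\O_n)$-bimodule property exactly as you describe; $w^*w=1$ gives $E_\lambda(1)=1$ and hence $E_\lambda|_{\lambda(\O_n)}=\id$; and $E_\lambda$ is completely positive as a composition of $*$-homomorphisms with a compression. This is already more than the paper itself supplies.

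The one substantive misdirection is your plan for the index, the part you postpone. No quasi-basis built from the Cuntz generators $\{\lambda(S_i)\}$ is needed, and no bookkeeping of nested sums arises: the hypotheses produce a \emph{one-element} quasi-basis, namely $\{(dw^*,\,dw)\}$, and the verification uses nothing beyond your two identities. For every $x\in\O_n$,
\[
dw^*\,E_\lambda(dw\,x)
= d^2\,w^*\lambda\bigl(w^*\lambda(w)\lambda(x)w\bigr)
= d\,w^*\lambda^2(x)\lambda(w)
= d\,x\,w^*\lambda(w)
= x ,
\]
where the second equality uses $w^*\lambda(w)=\frac{1}{d}$, the third uses $w^*\lambda^2(x)=xw^*$, and the fourth uses $w^*\lambda(w)=\frac{1}{d}$ once more; symmetrically, $E_\lambda(x\,dw^*)\,dw=x$ follows from $\lambda(w)^*w=\frac{1}{d}$ and $\lambda^2(x)w=wx$. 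Consequently $E_\lambda$ is of index-finite type and
\[
\mathrm{Index}\,E_\lambda=(dw^*)(dw)=d^2\,w^*w=d^2 .
\]
So the step you defer to \cite{I} is a three-line computation rather than a delicate quasi-basis construction; your deferral is consistent with the paper's own complete deferral, but had you pursued your stated plan (sums over $\{\lambda(S_i)\}$) you would have been working much harder than necessary, and in the wrong direction.
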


Endomorphism $\lambda$ as above is called a \emph{real sector}, as it is its own
conjugate endomorphism (\cite{L2}). Examples of
real sectors satisfying the conditions in the above proposition
cannot be obtained for $w=S_i$ and $\lambda$ being a permutation
endomorphism. Nevertheless, the endomorphism constructed in Example 3.7 of \cite{I} is of the form $\lambda=\lambda_{\sigma} \circ
\beta$, where $\beta$ is a Bogolyubov automorphism; moreover $\lambda^2$ is also a permutation endomorphism. Below we give proofs of these facts.

Let $G$ be a finite abelian group of cardinality $n$, written additively. Let
$\la\cdot, \cdot \ra:G \times G \to \Toro$ be a symmetric duality
bracket  satisfying the usual conditions ($g,g',h \in G$)
\[ \overline{\la g, h\ra} = \la -g, h\ra, \;\; \sum_{h \in G} \la h, g\ra =
 \begin{cases}0 & \text{if }  g\neq e \\ N & \text{if } g = e\end{cases} ,\]
\[ \la g,h \ra \la g',h\ra = \la g+g',h \ra, \;\; \la g, h \ra = \la h, g\ra. \]
If $G=\Z_n$ is a cyclic group then one can
put $\la k, l\ra:=\exp(\frac{2\pi i(kl)}{n})$. In this section, we
will use elements of $G$ as indices of generating isometries in $\O_n$. For $g\in G$, define unitaries
$U(g) \in \F_n \subset \O_n$ as
\[U(g) = \sum_{h \in G} \la g,h\ra S_h S_h^*. \]
Note that they give rise to a `unitary representation' of $G$. Now define
an endomorphism $\lambda\in \en(\O_n)$ by
\[ \lambda (S_g) = \frac{1}{\sqrt{n}} \sum_{h \in G} \la g, h \ra S_h U(g)^*.\]
The endomorphism $\lambda$ does not leave $\D_n$ invariant (unless $n=1$), as we have
\[ \lambda(S_g S_g^*) = \frac{1}{n} \sum_{h,k \in G} \la g, h-k \ra S_h S_k^*,\]
so for example \[\lambda(S_e S_e^*) = \frac{1}{n} \sum_{h,k \in G}  S_h S_k^*.\]
The unitary associated with $\lambda$ is equal to
\begin{equation} \label{Izunit}
v_\lambda= \frac{1}{\sqrt{n}} \sum_{g,h,l \in G} \la g,h-l \ra S_h S_l S_l^* S_g^*.
\end{equation}

For each $h \in G$ define an isometry
\[\widetilde{S}_h = \frac{1}{\sqrt{n}} \sum_{a \in G} \la h, a\ra S_a\]
and let $\beta \in \aut(\O_n)$ be given by
\[\beta(S_h) = \widetilde{S}_h, \;\;\; h \in G.\]
It is easy to see that $\beta$ is a Bogolyubov automorphism. Note that $\lambda':= \lambda \circ \beta$ is a
permutation endomorphism. Indeed, for each $h \in G$ we have
$$ \begin{aligned} \lambda'(S_h) &= \lambda(\widetilde{S}_h) = \frac{1}{\sqrt{n}}
\sum_{a \in G} \la h, a\ra \lambda(S_a) = \frac{1}{n} \sum_{a,b,c \in G}
\la h, a \ra \la b, a \ra \la -c, a \ra S_b S_c S_c^*
\\ &= \sum_{b \in G} S_b S_{h+b}S_{h+b}^*.
\end{aligned} $$

Let us now compute $\lambda^2$ (recall it is a canonical endomorphism in the sense of Longo).
For $g\in G$ we have
$$ \begin{aligned}
\lambda(U(g))  & = \sum_{h \in G} \la g,h\ra  \frac{1}{n} \sum_{k,l \in G} \la h, k-l \ra S_k S_l^* =
\frac{1}{n}  \sum_{h,k,l \in G} \la h, g+ k-l \ra S_k S_l^*  \\
 & = \sum_{k \in G} S_k {S_{g+k}}^*,
\end{aligned} $$
and  further
$$ \begin{aligned}\lambda^2(S_g) &= \frac{1}{\sqrt{n}} \sum_{h \in G} \la g, h \ra \lambda(S_h) \lambda(U(g))^* \\
 & = \frac{1}{n} \sum_{h \in G} \la g, h \ra  \sum_{l \in G} \la h,
 l \ra S_l U(h)^* \sum_{k \in G} S_{g+k} S_{k}^* \\
&= \frac{1}{n} \sum_{h,l,k \in G} \la g+l, h \ra  S_l  \sum_{a \in G} \la h,- a\ra S_a S_a^*  S_{g+k} S_{k}^* \\
& = \frac{1}{n} \sum_{h,l,k \in G} \la g+l, h \ra  S_l   \la h,- g - k\ra   S_{g+k} S_{k}^*\\
 & = \frac{1}{n} \sum_{h,l,k \in G} \la l-k, h \ra  S_l    S_{g+k} S_{k}^* \\
& = \sum_{k \in G}   S_k      S_{g+k} S_{k}^*.
\end{aligned}  $$
We can see from the formula above that $\lambda^2$ is a permutation endomorphism; its associated
unitary is equal to
\[ v_{\lambda^2} = \sum_{g,h \in G}   S_g      S_{h+g} S_{g}^* S_h^*.  \]

Below, we take a closer look at the simplest nontrivial case, $G=\Z_2$.

\begin{example}\label{izuexa}
\rm Let $G=\Z_2=\{0,1\}$ be equipped with the natural duality bracket; $\la 1, 1 \ra = -1$
and all other brackets take value $1$. The Izumi endomorphism discussed above is then given by
$$ \begin{aligned}
 \lambda(S_0)  & = \frac{1}{\sqrt{2}} (S_0 + S_1), \\
 \lambda(S_1) & = \frac{1}{\sqrt{2}} (S_0 S_0 S_0^* + S_1 S_1 S_1^*  - S_1 S_0 S_0^* - S_0 S_1 S_1^*).
\end{aligned} $$
The unitary associated with $\lambda$ is given by
$$ \begin{aligned} v_\lambda &= \frac{1}{\sqrt{2}} \left(S_{00,00} + S_{11,11} +
S_{01,10} + S_{00,01} + S_{10,00} +S_{11,10} - S_{10,01}  -S_{01,11}\right) \\
 & = \frac{1}{\sqrt{2}} (   S_{00,01} + S_{11,11} - S_{01,11} - S_{10,01} + S_{0,0} + S_{1,0}) \\
 &= \frac{1}{\sqrt{2}} (   1 + S_{0,1} + S_{1,0} - 2 S_{01,11} - 2 S_{10,01} ),
\end{aligned} $$
where we write $S_{i,j}:=S_i S_j^*$, $S_{ij,kl} := S_i S_j S_k^* S_l^*$ for all $i,j,k,l \in\{0,1\}$.

The permutation endomorphism $\lambda'$ considered above is induced by the unitary
$$ v_{\lambda'}= S_{00,00}+ S_{01,11} + S_{11, 01} + S_{10,10} $$
and equals $\rho_{24}$ of \cite{SZ}. Endomorphism $\lambda^2$ is associated to the unitary
$$ v_{\lambda^2} = S_{00,00}+ S_{01,01}+ S_{11, 10} + S_{10,11} $$
and equals $\rho_{243}$ of \cite{SZ}.

It turns out that endomorphism $\lambda$ does not leave any standard MASA of $\O_2$ invariant. Indeed,
let $z \in \U(\F_2^1)$. Then $\lambda$ preserves MASA $\lambda_{z^*}(\D_2)$ if and only if
the endomorphism associated with $X=\lambda_z(v_\lambda)$ preserves $\D_2$ (Proposition
\ref{normal-masa}). For this, it is necessary that $X\D_2^1X^*\subset\D_2^2$. So let
$$ z= aS_{0,0} +bS_{0,1} -\overline{b}S_{1,0} + \overline{a}S_{1,1} $$
for some $a,b\in{\mathbb C}$ with $|a|^2+|b|^2=1$ (clearly, it suffices to consider unitaries
$z$ with determinant $1$). Then we have  $X= \frac{1}{\sqrt{2}}zYz^*$, where
\[ Y = 1 +  S_{0,1} +  S_{1,0} - 2S_0 zS_1S_1^* z^* S_1^* - 2S_1zS_0 S_0^*z^* S_1^*.  \]
We verify that $XP_0 X^*$ does not belong to $\D_2^2$ for any choice of parameters $a,b$.
Indeed, since $z$ is in the commutant of $\varphi(\F_2^1)$, for $XP_0X^*$ to be in $\D_2^2$
it is necessary that $Y(z^*P_0 z)Y^*$ is in $\F_2^1\varphi(\D_2^1)$. For computational convenience,
we write $Y$ and $z^*P_0z$ as 4-by-4 matrices in block form. Then
$$ Y = \left[ \begin{array}{cc} 1 & Y_1 \\ 1 & Y_2 \end{array} \right] \;\;\;\;\; \text{and} \;\;\;\;\;
    z^*P_0z = \left[ \begin{array}{cc} c1 & d1 \\ \overline{d}1 & (1-c)1 \end{array} \right], $$
where $c=|a|^2$, $d=\overline{a}b$ and $Y_1, Y_2$ are self-adjoint
2-by-2 matrices such that
$$ Y_1 = \left[ \begin{array}{cc} 2a\overline{a}-1 & -2ab \\ -2\overline{ab} & 1-2a\overline{a}
    \end{array} \right], \;\;\;\;\; Y_2 = \left[ \begin{array}{cc} 1-2a\overline{a} & 2ab \\
    2\overline{ab} & 2a\overline{a}-1 \end{array} \right]. $$
Thus $Y(z^*P_0 z)Y^*$ equals
$$  \left[ \begin{array}{cc} c1+(d+\overline{d})Y_1+(1-c)Y_1^2 &
    c1+dY_2+\overline{d}Y_1+(1-c)Y_1Y_2 \\ c1+\overline{d}Y_2+dY_1+(1-c)Y_2Y_1 &
    c1+(d+\overline{d})Y_2+(1-c)Y_2^2  \end{array} \right]. $$
This is an element of $\F_2^1\varphi(\D_2^1)$ if and only if all its four blocks are diagonal 2-by-2 matrices.
An elementary calculation shows that this happens if and only if either $a=0$ or $b=0$. But for these
two choices of the parameters one can easily see that $XP_0X^*$ does not belong to $\D_2^2$.
\end{example}

\smallskip\noindent
Jeong Hee Hong \\
Department of Applied Mathematics \\
Korea Maritime University \\
Busan 606--791, South Korea \\
E-mail: hongjh@hhu.ac.kr \\

\smallskip\noindent
Adam Skalski \\
Department of Mathematics and Statistics \\
Lancaster University \\
Lancaster, LA1 4YF, United Kingdom\footnote{Permanent address: Faculty of Mathematics and Computer Science,
University of \L \'{o}d{\'z}, ul. Banacha 22, 90--238  \L \'{o}d{\'z}, Poland} \\
E-mail: a.skalski@lancaster.ac.uk \\

\smallskip \noindent
Wojciech Szyma{\'n}ski\\
Department of Mathematics and Computer Science \\
The University of Southern Denmark \\
Campusvej 55, DK-5230 Odense M, Denmark \\
E-mail: szymanski@imada.sdu.dk

\end{document}